\documentclass[11pt]{amsart}

%%%%%%%%%%%%%%%%%%%%%%%%%%%%%%%%%%%%%%%%%

\usepackage{amscd}
\usepackage{amsmath}
\usepackage{amssymb}
\usepackage{amsthm}
\usepackage{epsf}
\usepackage{float}
\usepackage{latexsym}
\usepackage{verbatim}
\usepackage{microtype}
\usepackage{tikz}
\usepackage{hyperref}
\usetikzlibrary{positioning}
\usetikzlibrary{matrix}

%\setlength{\textheight}{8.5in} \setlength{\topmargin}{0.0in}
%\setlength{\headheight}{0.5in} \setlength{\headsep}{0.3in}
%\setlength{\leftmargin}{1.5in}
%\setlength{\oddsidemargin}{0.0in}
%\setlength{\parindent}{1pc}
%\setlength{\textwidth}{6.5in}
%\linespread{1.6}

\newtheorem{theorem}{Theorem}[section]

\newtheorem{definition}[theorem]{Definition}
\newtheorem{lemma}[theorem]{Lemma}

\newtheorem{corollary}[theorem]{Corollary}
\newtheorem{proposition}[theorem]{Proposition}
\newtheorem{question}[theorem]{Question}
\newtheorem{varexample}[theorem]{Example}
\theoremstyle{definition}
\newtheorem{remark}[theorem]{Remark}

\newcommand{\PP}{\mathbb{P} }

\newcommand{\RR}{\mathbb{R}}
\newcommand{\ZZ}{\mathbb{Z}}

\newcommand{\Pic}{\operatorname{Pic}}
\newcommand{\Trop}{\operatorname{Trop}}

\newcommand{\Jac}{\operatorname{Jac}}

\newcommand{\an}{\operatorname{an}}

\newcommand{\br}{\beta}

\newenvironment{example}{\begin{varexample}
\begin{normalfont}}{\end{normalfont}
\end{varexample}}
%%%%%%%%%%%%%%%%%%%%%%%%%%%%%%%%%%%%%%%%%%%%%%%%%%%%%%%%%%%%%%
\begin{document}
\title{Lifting divisors on a generic chain of loops}
\address{Department of Mathematics \\ Yale University \\ PO Box 208283 \\ New Haven, CT 06520}
\email{dustin.cartwright@yale.edu}
\email{david.jensen@yale.edu}
\email{sam.payne@yale.edu}
\author{Dustin Cartwright}
\author{David Jensen}
\author{Sam Payne}
\date{}
\bibliographystyle{alpha}

\begin{abstract}
Let $C$ be a curve over a complete valued field with infinite residue field whose skeleton is a chain of loops with generic edge lengths.  We prove that
any divisor on the chain of loops that is rational over the value group lifts to a divisor of the same rank on $C$, confirming a conjecture of Cools,
Draisma, Robeva, and the third author.
\end{abstract}

\maketitle

\section{Introduction}

The tropical proof of the Brill-Noether Theorem in~\cite{tropicalBN} gives a classification of the special divisors on a chain of $g$ loops with
generic edge lengths.  The same argument works for a generic chain of loops with bridges, as studied in \cite{tropicalGP}, since it depends only on the
tropical Jacobian and the image of the Abel-Jacobi map.  Suppose $\Gamma$ is such a chain of loops with or without bridges, as shown in
Figure~\ref{TheGraph}, and $C$ is a curve over a complete valued field~$K$ with totally split reduction and skeleton~$\Gamma$.  The classification of special divisors on $\Gamma$ shows in particular that the polyhedral set $W^r_d(\Gamma) \subset \Pic_d(\Gamma)$ parametrizing divisor classes of rank at least $r$ has pure dimension equal to
\begin{equation}\label{Eq:Rho}
\rho(g,r,d) = g-(r+1)(g-d+r),
\end{equation}
which is the dimension of the analogous Brill-Noether locus $W^r_d(C)\subset \Pic_d(C)$ for the algebraic curve~$C$.

The real torus $\Pic_d(\Gamma)$ is canonically identified with the skeleton of the Berkovich analytic space $\Pic_d(C)^{\an}$ \cite{BakerRabinoff13}.  Furthermore, the tropicalization map given by retraction to this skeleton respects dimension \cite{Gubler07} and maps $W^r_d(C)^{\an}$ into $W^r_d(\Gamma)$
\cite{Baker08}, so the coincidence of dimensions suggests the possibility that $W^r_d(\Gamma)$ might be equal to the tropicalization of $W^r_d(C)$.  Our
main result confirms this possibility and proves \cite[Conj.~1.5]{tropicalBN} over complete fields with infinite residue field.

\begin{theorem}
\label{Thm:MainThm}
Let $K$ be a complete field with infinite residue field, and let $C$ be a smooth projective curve of genus~$g$ over $K$.  If $C$ has totally split reduction and skeleton is isometric to~$\Gamma$ then every divisor class on $\Gamma$ that is rational over the value group of $K$ lifts to a divisor class of the same rank on $C$.
\end{theorem}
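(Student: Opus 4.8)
The plan is to produce a single lift of the prescribed class whose rank does not drop, controlling one inequality by specialization and the other by an explicit general-position construction. Write $r = r_\Gamma(\bar D)$ for the rank of the given class on $\Gamma$. The specialization result of Baker, invoked above in the guise of the map $W^r_d(C)^{\an} \to W^r_d(\Gamma)$, shows that \emph{any} class $D$ on $C$ with $\trop(D) = \bar D$ already satisfies $r_C(D) \le r$; so the entire content is to exhibit one lift with $r_C(D) \ge r$. That a lift exists at all is the easy part: since $C$ has totally split reduction, $\Jac(C)$ is totally degenerate, and Raynaud uniformization identifies the skeleton of $\Pic_d(C)^{\an}$ with $\Pic_d(\Gamma)$ in such a way that $\trop$ is surjective onto the value-group-rational classes, with the set of lifts of $\bar D$ a torsor under the kernel of $\trop$ (a torus over the residue field). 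The freedom in this torsor, together with the freedom in choosing lifts of points, is exactly what I would exploit.

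Next I would reduce the rank-$r$ statement to an effectivity statement after subtracting general points. Because $h^0(C, D - E)$ is upper semicontinuous as $E$ ranges over $\mathrm{Sym}^r C$, the locus $\{E : h^0(D-E) \ge 1\}$ is closed, so to prove $r_C(D) \ge r$ it suffices to verify $h^0(D-E) \ge 1$ for $E$ ranging over a dense set of effective divisors of degree $r$; one must check that the $E$ whose tropicalizations are general on $\Gamma$ form such a dense set. I would take $E = p_1 + \cdots + p_r$ with $\bar p_i = \trop(p_i)$ general value-group-rational points. By the Brill--Noether classification of special divisors on $\Gamma$ recalled in the introduction, subtracting $r$ general points from a rank-$r$ class drops the rank by exactly $1$ each time, so $\bar F := \bar D - \sum \bar p_i$ is an effective class of rank $0$. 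This class is visibly the tropicalization of an effective class on $C$: its support consists of type-$2$ points, each of which is the retraction of infinitely many rational points of $C$ precisely because the residue field is infinite, so one may lift an effective representative of $\bar F$ point by point.

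The crux — and the step I expect to be the main obstacle — is that lifting the \emph{class} $\bar F$ to an effective class on $C$ is not yet enough: I must arrange that the \emph{particular} class $D - E$, which is only one point in the kernel-torsor of lifts of $\bar F$, is the effective representative. Navigating this positive-dimensional kernel of $\trop$ is where the residue field must be used decisively. The plan is to choose the lift $D$ and the point-lifts $p_i$ simultaneously and to encode the condition ``$D - E$ effective'' as the non-vanishing of a regular function (a determinant built from a residue pairing on the explicit special-fiber geometry of the generic chain of loops) on the residue-field parameters of these choices. One then shows this function is not identically zero, most naturally by exhibiting a single good choice in a degenerate model, so that over an infinite residue field a solution avoiding the proper closed bad locus exists. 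Assembling these choices loop by loop across all $g$ loops — with the Brill--Noether--general edge lengths guaranteeing the local pictures are as generic as possible, so that the local non-vanishing conditions are simultaneously satisfiable — yields a lift with $h^0(C, D) = r+1$ and completes the proof.
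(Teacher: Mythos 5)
Your reductions at the start are fine as far as they go: Baker's specialization inequality does give $r_C(\mathcal D) \le r$ for any lift, the theorem is indeed equivalent to producing \emph{one} lift with $r_C(\mathcal D) \ge r$, and by semicontinuity it would suffice to verify $h^0(\mathcal D - \mathcal E) \ge 1$ for $\mathcal E$ in a Zariski-dense subset of $\mathrm{Sym}^r C$. But the proof collapses at exactly the step you flag as the crux, and for a structural reason, not a technical one. You choose the lift $\mathcal D$ \emph{simultaneously} with the point-lifts $p_i$, i.e., one tropical configuration $\bar E$ at a time. At best this produces, for each general $\bar E$, a pair $(\mathcal D_{\bar E}, \mathcal E)$ with $\mathcal D_{\bar E} - \mathcal E$ effective; it does not produce a single $\mathcal D$ that works for all $\bar E$. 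For a fixed lift $\mathcal D$, the set of classes attainable as (effective lift of $\bar F$) $+$ (lift of $\bar E$) is the image of an addition map on fibers of tropicalization, a subset of the $g$-dimensional fiber over $[\bar D]$ of dimension at most $d$, hence typically proper (for instance whenever $d < g$); so for any given $\bar E$ most lifts of $[\bar D]$ fail your condition, and what you ultimately need to be non-empty is the intersection, over \emph{all} general $\bar E$, of these proper subsets --- which is precisely $W^r_d(C)^{\an} \cap \trop^{-1}([\bar D])$, i.e., the theorem itself. Your ``loop by loop'' assembly runs over the $g$ loops of $\Gamma$, not over the infinitely many configurations $\bar E$, and nothing in the proposal shows that these infinitely many conditions are simultaneously satisfiable.

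Second, the proposed encoding has the parity backwards, and this hides where the infinite residue field must really be used. Effectivity of $\mathcal D - \mathcal E$ means $h^0(\mathcal D - \mathcal E) \ge 1$, i.e., a determinantal \emph{degeneracy} (vanishing of maximal minors of an evaluation matrix): it is a closed condition on your residue-field parameters, not the non-vanishing of a regular function. Over an infinite field, the non-vanishing locus of a nonzero function has rational points for free, but a non-empty \emph{closed} locus need not have any rational points at all --- this is exactly the phenomenon in the paper's example over $K = \mathbb F_3((t))$, where a lift exists over the algebraic closure but not over $K$. The paper cannot and does not avoid this: its proof shows that the relevant initial degeneration $\mathfrak W_0 \subset \mathbb G_m^g$ has a component mapping birationally to $\mathbb G_m^\rho$, so that density of rational points in the torus plus Hensel's lemma yields a smooth rational point. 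That birationality rests on the multiplicity-one statement for the facets of $\Theta_\Gamma$ (Lemma~\ref{Lemma:MultiplicityOfTheta}), the local expression of $W^r_d(\Gamma)$ as an intersection of $g-\rho$ theta translates (Proposition~\ref{Prop:TropicalLocalEqns}), the scheme-theoretic containments of Corollary~\ref{Cor:LocalEqns}, Rabinoff's lifting theorem, and the counting formula of Proposition~\ref{Prop:IntersectionNumber}. Your ``determinant built from a residue pairing,'' to be made to work, would have to reconstruct essentially all of this; as written it is a placeholder rather than a construction, and the generic edge lengths of $\Gamma$ --- without which the statement is false --- enter your argument only as an unexamined genericity assurance.
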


%Figure 1
\begin{figure}[h!] \label{Fig:ChainOfLoops}
\begin{tikzpicture}

\draw [ball color=black] (-1.7,-0.45) circle (0.55mm);
\draw (-1.95,-0.65) node {\footnotesize $v_1$};
\draw (-1.5,0) circle (0.5);
\draw (-1,0)--(0,0.5);
\draw [ball color=black] (-1,0) circle (0.55mm);
\draw (-0.85,0.3) node {\footnotesize $w_1$};
\draw (0.7,0.5) circle (0.7);
\draw (1.4,0.5)--(2,0.3);
\draw [ball color=black] (1.4,0.5) circle (0.55mm);
\draw [ball color=black] (0,0.5) circle (0.55mm);
\draw (-0.2,0.75) node {\footnotesize $v_2$};
\draw (2.6,0.3) circle (0.6);
\draw (3.2,0.3)--(3.87,0.6);
\draw [ball color=black] (2,0.3) circle (0.55mm);
\draw [ball color=black] (3.2,0.3) circle (0.55mm);
\draw [ball color=black] (3.87,0.6) circle (0.55mm);
\draw (4.5,0.3) circle (0.7);
\draw (5.16,0.5)--(5.9,0);
\draw (6.4,0) circle (0.5);
\draw [ball color=black] (5.16,0.5) circle (0.55mm);
\draw (5.48,0.74) node {\footnotesize $w_{g-1}$};
\draw [ball color=black] (5.9,0) circle (0.55mm);
\draw [ball color=black] (6.9,0) circle (0.55mm);
\draw (5.7,-.2) node {\footnotesize $v_g$};
\draw (7.3,-.2) node {\footnotesize $w_g$};

\draw [<->] (3.3,0.4) arc[radius = 0.715, start angle=10, end angle=170];
\draw [<->] (3.3,0.2) arc[radius = 0.715, start angle=-9, end angle=-173];

\draw (2.5,1.25) node {\footnotesize$\ell_i$};
\draw (2.75,-0.7) node {\footnotesize$m_i$};
\end{tikzpicture}
\caption{The bridges of $\Gamma$ have arbitrary non-negative lengths and the edges of the cycles have generic positive lengths.  The
precise genericity condition is given at the beginning of Section~\ref{Sec:ChainOfLoops}.}
\label{TheGraph}
\end{figure}

\noindent One interesting special case is when $\rho(g,r,d)$ is zero.  In this case, the chain of loops $\Gamma$ has exactly
\begin{equation}\label{Eq:Cardinality}
\lambda(g,r,d) = g! \prod_{i=0}^r \frac{i!}{(g-d+r+i)!}
\end{equation}
divisor classes of degree $d$ and rank $r$, which is equal to the number of divisor classes of degree $d$ and rank $r$ on a general curve of genus $g$
\cite{GriffithsHarris80}.  Theorem~\ref{Thm:MainThm} then shows that the tropicalization map from $W^r_d(C)$ to $W^r_d(\Gamma)$ is bijective.

\begin{remark}
The special case of Theorem~\ref{Thm:MainThm} where $r = 1$ and $\rho(g,r,d) = 0$ is due to Cools and Coppens, who gave a proof using the basepoint free pencil trick \cite{CoolsCoppens14}.  Kawaguchi and Yamaki recently proved analogous lifting theorems for special divisors on skeletons of totally
degenerate hyperelliptic curves and non-hyperelliptic curves of genus 3.  Their results include the special cases of Theorem~\ref{Thm:MainThm} where $g$ is
at most $3$ \cite{KawaguchiYamaki13, KawaguchiYamaki14}.
\end{remark}

While Theorem~\ref{Thm:MainThm} establishes Conjecture~1.5 from~\cite{tropicalBN} for complete valued fields with infinite residue field, the conjecture can be false when the residue field is finite, as in the following example.

\begin{example}
Suppose $\Gamma$ is a chain of $g \geq 2$ loops with generic integral edge
lengths and no bridges, as in \cite{tropicalBN}. Let $K = \mathbb F_3((t))$ with
the $t$-adic valuation.  By deformation theory, there exists a smooth projective
curve $C$ over $K$ with totally split reduction that has skeleton isometric
to~$\Gamma$~\cite[Thm.~B.2]{Baker08}.  Let $[D]$ be the class of the vertex~$v_2$ as in Figure~\ref{TheGraph}, which is the unique
effective divisor in its linear equivalence class. If $\mathfrak C$ denotes the
regular semistable model of~$C$, then a lift of~$[D]$ to $C$ would give rise to an effective divisor whose closure in~$\mathfrak C$ intersects
the component corresponding to~$v_2$ with multiplicity~$1$, and hence the intersection would be a smooth point of this component defined over $\mathbb
F_3$.  However, this component is isomorphic to $\PP^1_{\mathbb F_3}$ and $v_2$ has degree $4$ in~$\Gamma$, so all of its $\mathbb F_3$-rational points are
nodes.  We conclude that there is no such lift.
\end{example}

Theorem~\ref{Thm:MainThm} and \cite[Thm.~1.3]{tropicalBN} for the chain of loops are together somewhat analogous to the regeneration theorem of Eisenbud and Harris
\cite[Thm.~3.4]{EisenbudHarris86} for nodal curves of compact type. The regeneration theorem says that the space of limit linear series on a nodal curve of compact type has dimension everywhere locally at least the
expected dimension, and a point where the local dimension is exactly the expected dimension is the limit of a linear series on the general fiber of a
one-parameter smoothing.  The following questions ask in what generality such analogues of the regeneration theorem should hold for arbitrary metric
graphs.

\begin{question}
\label{Conj:LocalDim}
If $\Gamma'$ is a metric graph of genus $g$, is the dimension of $W^r_d(\Gamma')$ everywhere locally at least $\rho(g,r,d)$?
\end{question}

\begin{question}
\label{Conj:TropicalRegeneration}
Let $C$ be a totally degenerate curve over an algebraically closed field $K$ with skeleton $\Gamma'$. Suppose $W^r_d(\Gamma')$ has local dimension $\rho(g,r,d)$ at the class of a
divisor $D$ and that $D$ is rational over the value group of $K$. Is there necessarily a divisor of degree $d$ and rank $r$ on
$C$ that tropicalizes to~$D$?
\end{question}

\noindent The answers to Questions~\ref{Conj:LocalDim} and Question~\ref{Conj:TropicalRegeneration} are affirmative for chains of loops with generic edge lengths, as shown in \cite{tropicalBN} and the present paper, respectively.  Both questions are wide open for arbitrary graphs, and no counterexamples are known.

We now explain the general strategy used in the proof of Theorem~\ref{Thm:MainThm}.  Because of the totally split reduction and infinite residue field, the curve $K$ has infinitely many $K$-points.  We can therefore choose a base point on the curve so that each
component of the Picard scheme of $C$ is identified with the Jacobian.  We then study the Brill-Noether loci as subschemes
\[
W^r_d(C) \subset \Jac(C).
\]
Since $C$ is totally degenerate, the universal cover of $\Jac(C)^{\an}$ gives a uniformization
\[
T^{\an} \rightarrow \Jac(C)^{\an},
\]
by an algebraic torus $T$ of dimension $g$.  The tropicalization of this torus is the universal cover of the skeleton of $\Jac(C)$, which is canonically
identified with the tropical Jacobian of $\Gamma$ \cite{BakerRabinoff13}.

One of our key tools is Rabinoff's lifting theorem~\cite{Rabinoff12}, which we apply to the analytic preimages in~$T$ of algebraic subschemes of $\Jac(C)$.
This lifting theorem says that isolated points in complete intersections of tropicalizations of analytic hypersurfaces lift to
points in the analytic intersection with appropriate multiplicities.  We apply it to translates of the preimage of the theta divisor $\Theta_{\Gamma} =
W^0_{g-1} (\Gamma)$, as follows.

The Baker-Norine definition of rank from \cite{BakerNorine07} implicitly expresses $W^r_d(\Gamma)$ as an intersection of translates of $\Theta_{\Gamma}$.
In Proposition~\ref{Prop:Containments}, we reinterpret this construction scheme-theoretically and show that the local equations for the corresponding
translates of $\Theta_C$ vanish on $W^r_d(C)$.  When $\rho$ is zero, we produce explicit translates of $\Theta_C$ whose tropicalizations intersect
transversally with multiplicity 1 at a given point of $W^r_d(\Gamma)$.  By Rabinoff's lifting theorem, applied on the universal cover of $\Jac(C)^{\an}$,
there is exactly one point in this complete intersection over the point of $W^r_d(\Gamma)$.  The rest of this complete intersection is typically
larger than $W^r_d(\Gamma)$, but the argument shows that the tropicalization map $W^r_d(C) \rightarrow W^r_d(\Gamma)$ is injective.  Since the two sets
have the same cardinality, we conclude that it is bijective.

When $\rho$ is positive, we consider a point $x$ in a dense subset of $W^r_d(\Gamma)$ and choose $\rho$ additional translates of $\Theta_{\Gamma}$
that meet $W^r_d(\Gamma)$ transversally in the expected number of points. For each point in this intersection, we combine the $\rho$ previously chosen
translates of $\Theta_{\Gamma}$ with $g - \rho$ additional translates that contain $W^r_d(C)$, and then apply a similar lifting and counting argument to
conclude that $x$ is in the image of $W^r_d(C)^{\an}$.  Since the tropicalization map $W^r_d(C)^{\an} \rightarrow W^r_d(\Gamma)$ is proper and its
image contains a dense subset, we conclude that it is surjective.  We then study the initial degenerations of~$W^r_d(C)$, and show that these are rational over the residue field of~$K$. Finally, since the residue field is infinite, we conclude that these initial degenerations have smooth rational points, which lift to points over~$K$ by Hensel's Lemma.

\medskip

The main result in~\cite{tropicalGP} shows that if the bridges in~$\Gamma$ have positive length, then $C$ is Gieseker-Petri general, and hence
$W^r_d(C)$ is smooth away from $W^{r+1}_d(C)$.  Our argument also gives the weaker statement that $W^r_d(C)$ is generically smooth, but without the hypothesis that the graph~$\Gamma$ has bridges. More precisely, we have:

\begin{proposition}\label{Prop:Reduced}
If $C$ is a curve over a valued field~$K$ with skeleton isometric to~$\Gamma$, then $W^r_d(C)$ is reduced.
In particular, if $\rho(g,r,d)$ is zero then $W^r_d(C)$ is smooth.
\end{proposition}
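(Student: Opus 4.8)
The plan is to establish the two Serre conditions $R_0$ and $S_1$ separately: the determinantal structure of $W^r_d(C)$ will give $S_1$, and the tropical transversality from the proof of Theorem~\ref{Thm:MainThm} will give $R_0$.

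First I would record that $W^r_d(C)$ has pure dimension $\rho(g,r,d)$. With its natural scheme structure as a degeneracy locus in $\Jac(C)$, it is cut out locally by the maximal minors of a presentation matrix, so its codimension is at most the expected value $(r+1)(g-d+r)$ and every component has dimension at least $\rho$. Conversely, the tropicalization map carries $W^r_d(C)^{\an}$ into $W^r_d(\Gamma)$, which has pure dimension $\rho$, and tropicalization preserves dimension by Gubler's theorem, so each component has dimension at most $\rho$. Hence $W^r_d(C)$ is equidimensional of the expected dimension, and the general theory of determinantal loci (as for generic determinantal varieties) shows that it is Cohen--Macaulay; in particular it satisfies $S_1$ and has no embedded components.

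It then suffices to prove $R_0$, that $W^r_d(C)$ is reduced at the generic point of each component. Let $\xi$ be such a generic point and $x = \trop(\xi)$ its image, which lies in a dense subset of $W^r_d(\Gamma)$. By Proposition~\ref{Prop:Containments} there are $g-\rho$ translates of $\Theta_C$ containing $W^r_d(C)$ whose tropicalizations, by the transversality input of the proof of Theorem~\ref{Thm:MainThm}, meet transversally with multiplicity one in a neighborhood of $x$ and cut out $W^r_d(\Gamma)$ there. Since the corresponding lattice directions are primitive, the initial degeneration of this intersection at $x$ is smooth, so by Rabinoff's lifting theorem the analytic intersection of the translates is smooth at the point over $x$. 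Because $W^r_d(C)$ is contained in this intersection and has the same dimension $\rho$ near $\xi$, the two have the same support there; as the ambient intersection is reduced, they coincide locally, and $W^r_d(C)$ is smooth, hence reduced, at $\xi$.

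Combining $R_0$ with $S_1$ shows that $W^r_d(C)$ is reduced. When $\rho(g,r,d)=0$ it is reduced of dimension zero, and its points are the $K$-rational lifts produced in the proof of Theorem~\ref{Thm:MainThm}, so it is smooth. I expect the main obstacle to be the $R_0$ step: one must know that the $g-\rho$ translates of $\Theta_C$ containing $W^r_d(C)$ have tropicalizations meeting transversally with multiplicity one and with primitive directions not merely at the isolated points relevant when $\rho=0$, but at the generic point of every positive-dimensional component of $W^r_d(\Gamma)$. This is exactly the tropical intersection-theoretic heart of the main theorem, now needed along a dense subset of each component.
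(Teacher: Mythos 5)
Your overall skeleton coincides with the paper's: reducedness is obtained as generic reducedness ($R_0$) plus absence of embedded points, with the latter coming from the determinantal, hence Cohen--Macaulay, structure of $W^r_d(C)$ together with the dimension count (determinantal lower bound $\geq \rho$, tropicalization into the pure $\rho$-dimensional set $W^r_d(\Gamma)$ giving $\leq \rho$). That half of your argument is correct and is exactly what the paper does. The gap is in your $R_0$ step. Rabinoff's lifting theorem, in the form used throughout this paper, applies to \emph{isolated} points of zero-dimensional complete intersections of tropicalizations of analytic hypersurfaces; it does not convert tropical transversality along a positive-dimensional facet into smoothness, or even generic reducedness, of the positive-dimensional analytic intersection of the $g-\rho$ translates of $\Theta_C$. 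Your sentence ``the initial degeneration of this intersection at $x$ is smooth, so by Rabinoff's lifting theorem the analytic intersection of the translates is smooth at the point over $x$'' is precisely the unproved step, and you correctly flag it as the main obstacle but leave it unresolved. (A smaller issue: the generic point $\xi$ of a component is not a closed point, so $\trop(\xi)$ is not a point of $W^r_d(\Gamma)$; one must instead work with closed points lying over a dense set of tropical points.)

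The paper's route around this obstacle is to never leave the zero-dimensional setting, so that only the already-available form of the lifting theorem is needed. One intersects $W^r_d(C)$ with $\rho$ \emph{additional} translates $\Theta_C + [\mathcal{E}_i]$ whose tropicalizations are chosen as in Proposition~\ref{Prop:IntersectionNumber}. The complete intersection of all $g$ translates (these $\rho$ together with the $g-\rho$ translates containing $W^r_d(C)$ by Corollary~\ref{Cor:LocalEqns}) is zero-dimensional; Rabinoff's theorem produces exactly one \emph{reduced} point over each tropical intersection point, and the cardinality comparison between Proposition~\ref{Prop:IntersectionNumber} and the class computation of \cite{ACGH} shows these points exhaust $\mathcal{X} = W^r_d(C) \cap \bigcap_i [\Theta_C + \mathcal{E}_i]$. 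Since a closed subscheme of a reduced point is either empty or that reduced point, $\mathcal{X}$ consists of distinct reduced points; cutting the local ring of $W^r_d(C)$, which has dimension $\rho$ at such a point, by $\rho$ equations and obtaining a field forces $W^r_d(C)$ to be regular there. Ampleness of $\Theta_C$ guarantees that every component of $W^r_d(C)$ meets such an intersection, which gives generic reducedness on every component. If you replace your $R_0$ paragraph with this slicing argument---which requires no new tropical input beyond what the proof of Theorem~\ref{Thm:MainThm} already establishes---your proof goes through; as written, it rests on a positive-dimensional lifting statement that neither you nor the paper proves.
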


\begin{remark}
Our approach in Theorem~\ref{Thm:MainThm} is inspired by the tropical scheme theory of Giansiracusa and Giansiracusa \cite{GiansiracusaGiansiracusa13} and the tropical Hilbert-Chow
morphism of Maclagan and Rinc\'{o}n \cite{MaclaganRincon14}.  Although the results of those papers are not used in the proofs, our main arguments relating
various multiplicities to expressions of tropical Brill-Noether loci as intersections of translates of the tropical theta divisor grew out of a desire to
understand $W^r_d(\Gamma)$ as a tropical scheme.
\end{remark}

\begin{remark}
When finishing this paper, we learned of an independent proof of Theorem~\ref{Thm:MainThm} by Amini and Baker.  Their approach uses specialization through a point of $\mathcal{M}_g$ over a rank 2 valuation ring, in which the general fiber is smooth, the intermediate fiber is a chain of elliptic curves, and the special fiber is totally degenerate. They then proceed by connecting the tropical theory on the special fiber with the classical theory of limit linear series on the intermediate fiber. The methods are disjoint and complementary, and we expect that both approaches will be fruitful for future applications, perhaps in combination.
\end{remark}

\noindent \textbf{Acknowledgments.}
We would like to thank O.~Amini, M.~Baker, N.~Giansiracusa, and W.~Gubler for
helpful conversations.  The second author was supported in part by an AMS-Simons
Travel Grant.  The third author was supported in part by NSF DMS--1068689 and by NSF CAREER DMS--1149054.

\section{Brill-Noether loci on the chain of loops}\label{Sec:ChainOfLoops}

We begin this section by reviewing the classification of special divisors on the chain of loops with generic edge lengths $\Gamma$, following
\cite{tropicalBN} to which we refer the reader for proofs and further details. As in that paper, we always assume that none of the ratios $\ell_i/m_i$ is
equal to the ratio of two positive integers whose sum is at most $2g-2$. We then build up to Proposition~\ref{Prop:TropicalLocalEqns}, which describes the
local structure of $W^r_d(\Gamma)$ near the class of a vertex avoiding divisor as a complete intersection of translates of the tropical theta divisor, and
Proposition~\ref{Prop:IntersectionNumber}, which generalizes the counting formula (\ref{Eq:Cardinality}) to the case where the dimension of $W^r_d(\Gamma)$
is positive.

\subsection{Classification of special divisors on \texorpdfstring{$\Gamma$}{Gamma}} \label{sec:classification}

Recall that an effective divisor~$D$ on a metric graph $\Gamma'$ is $v$-reduced, where $v$ is a point in $\Gamma'$, if the multiset of distances from $v$
to points in~$D$ is lexicographically minimal among all
effective divisors equivalent to~$D$. The $v_1$-reduced divisors
on~$\Gamma$ are classified as follows. For each~$i$, let $\gamma_i$ be
the $i$th loop of~$\Gamma$ minus $v_i$, and let $\br_i$ be the half-open bridge $(w_i, v_{i+1}]$.  Then
$\Gamma$ decomposes as a disjoint union
\[
\Gamma = \{ v_1 \} \sqcup \gamma_1 \sqcup \br_1 \sqcup \cdots \sqcup \gamma_g,
\]
as shown in Figure~\ref{fig:decomposition}, and an effective divisor is
$v_1$-reduced if and only if it contains at most one point on each of the
punctured loops~$\gamma_i$ and no points on the bridges $\br_i$.

%Figure 2

\begin{figure}[H]
\begin{tikzpicture}
\matrix[column sep=0.5cm] {
\begin{scope}[grow=right, baseline]
\draw [ball color = black] (-2,-0.45) circle (0.55mm);
\draw node at (-2.15,-0.65) {\footnotesize $v_1$};
\end{scope}
&
\begin{scope}[baseline]
\draw [ball color=white] (-1.7,-0.45) circle (0.55mm);
\draw (-1.5,0) circle (0.5);
\draw [ball color=black] (-1,0) circle (0.55mm);
\draw (-1.5,1.0) node {\footnotesize $\gamma_1$};
\end{scope}
&
\begin{scope}[grow=right,baseline]
\draw [ball color=white] (-1,0) circle (0.55mm);
\draw (-1,0)--(0,0.5);
\draw [ball color=black] (0,0.5) circle (0.55mm);
\draw (0.2,0.4) node {\footnotesize $v_2$};
\draw (-0.5,1.2) node {\footnotesize $\br_1$};
\end{scope}
&
\begin{scope}[grow=right,baseline]
\draw node at (.5,0.48) {$\cdots$};
\end{scope}
&
\begin{scope}[grow=right,baseline]
\draw (0.7,0.5) circle (0.7);
\draw [ball color=black] (1.4,0.5) circle (0.55mm);
\draw [ball color=white] (0,0.5) circle (0.55mm);
\draw (0.7,1.5) node {\footnotesize $\gamma_i$};
\end{scope}
&
\begin{scope}[grow=right,baseline]
\draw (1.4,0.5)--(2,0.3);
\draw [ball color=black] (2,0.3) circle (0.55mm);
\draw [ball color=white] (1.4,0.5) circle (0.55mm);
\draw (1.7,1.1) node {\footnotesize $\br_i$};
\end{scope}
&
\begin{scope}[grow=right,baseline]
\draw node at (2,0.3) {$\cdots$};
\end{scope}
&
\begin{scope}[grow=right,baseline]
\draw (2.6,0.3) circle (0.6);
\draw [ball color=white] (2,0.3) circle (0.55mm);
\draw [ball color=black] (3.2,0.3) circle (0.55mm);
\draw (2.6,1.2) node {\footnotesize $\gamma_g$};
\end{scope}
\\};
\end{tikzpicture}
\caption{A decomposition of $\Gamma$.}\label{fig:decomposition}
\end{figure}

Since every effective divisor on~$\Gamma$ is equivalent to a unique
$v_1$-reduced divisor, each effective divisor class is represented uniquely by a vector $(d_0, x_1,
\ldots, x_g)$, where $d_0$ is the coefficient of~$v_1$, and $x_i
\in \RR/(\ell_i + m_i) \mathbb Z$ is the distance from $v_i$ to the chip on
the $i$th punctured loop~$\gamma_i$, measuring counterclockwise, if such a chip
exists and $x_i$ is set to $0$ otherwise. An effective divisor together with a positive integer $r$ also determines a lingering lattice path, which is a
sequence $p_0, \ldots, p_g$ of points in $\ZZ^r$, as follows.

\begin{definition}
Let $D$ be the $v_1$-reduced divisor represented by the data $(d_0 , x_1 , \ldots , x_g )$.  Then the associated \textit{lingering lattice path} $P$ in
$\ZZ^r$ starts at $p_0 = (d_0 , d_0 -1, \ldots , d_0 -r+1)$ with the $i$th step given by
\begin{displaymath}
p_i - p_{i-1} = \left\{ \begin{array}{ll}
(-1,-1, \ldots , -1) & \textrm{if $x_i = 0$}\\
e_j & \textrm{if $x_i = (p_{i-1}(j) +1)m_i$ mod $\ell_i + m_i$} \\
  & \textrm{and both $p_{i-1}$ and $p_{i-1} + e_j$ are in $\mathcal{C}$}\\
0 & \textrm{otherwise}
\end{array} \right\}
\end{displaymath}
where $e_0 , \ldots e_{r-1}$ are the standard basis vectors in $\mathbb{Z}^r$ and $\mathcal{C}$ is the open Weyl chamber
$$ \mathcal{C} = \{ y \in \mathbb{Z}^r \mid y_0 > y_1 > \cdots > y_{r-1} > 0 \}. $$
\end{definition}

\begin{proposition}
\label{Prop:LLP}
\cite[Theorem 4.6]{tropicalBN}
A divisor $D$ on $\Gamma$ has rank at least $r$ if and only if the associated lingering lattice path lies entirely in the open Weyl chamber $\mathcal{C}$.
\end{proposition}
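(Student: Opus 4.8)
The plan is to work directly from the Baker--Norine definition of rank in terms of reduced divisors. Recall that $r(D)\ge r$ precisely when $|D-E|\ne\emptyset$ for every effective divisor $E$ of degree $r$, and that $|D'|\ne\emptyset$ can be decided by computing the $v$-reduced representative of $D'$ via Dhar's burning algorithm: $D'$ is equivalent to an effective divisor if and only if this representative is effective. Since one is free to choose and to move the base point of reduction, I would first replace the universal quantifier over all $E$ by a structured family of test divisors adapted to the loop-by-loop geometry of $\Gamma$, exploiting that a $v_1$-reduced divisor places at most one chip on each punctured loop $\gamma_i$.

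The core of the argument is a propagation lemma obtained by processing the loops $\gamma_1,\dots,\gamma_g$ in order. I would maintain as state the lattice point $p_i\in\ZZ^r$, interpreting its coordinates $p_i(0),\dots,p_i(r-1)$ as the coefficients, at the node separating the already-processed loops from the remainder, of the reduced representatives of $D$ twisted by test divisors of degrees $0,1,\dots,r-1$. The claim to prove is that passing from $p_{i-1}$ to $p_i$ is governed entirely by the position $x_i$ of the chip on $\gamma_i$: running Dhar's burning algorithm across a single loop either pushes every profile one step toward the node (the $(-1,\dots,-1)$ move, occurring exactly when $x_i=0$), advances a single coordinate (the $e_j$ move, occurring exactly when the congruence $x_i\equiv(p_{i-1}(j)+1)m_i \pmod{\ell_i+m_i}$ holds and both $p_{i-1}$ and $p_{i-1}+e_j$ lie in $\mathcal{C}$), or leaves the profile unchanged (the lingering move). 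The genericity hypothesis on the ratios $\ell_i/m_i$ is what guarantees that at most one such congruence can be satisfied at once, so that the move is well defined and the generic behavior is lingering.

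With the propagation lemma in hand, I would translate the effectivity criterion into the chamber condition. Effectivity of each reduced twist along the whole chain forces the last coordinate of every $p_i$ to stay positive, while the requirement that the profiles for consecutive degrees remain mutually consistent forces the strict inequalities $p_i(0)>\cdots>p_i(r-1)$; conversely, if $p_i\in\mathcal{C}$ for all $i$ then every test twist reduces to an effective divisor, giving $r(D)\ge r$. Thus $r(D)\ge r$ if and only if the path never leaves $\mathcal{C}$.

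The main obstacle is the propagation lemma itself: one must carry out Dhar's burning algorithm on a single loop carrying one chip at distance $x_i$ and show that the resulting reduced configuration is controlled \emph{exactly} by the stated congruence, simultaneously for all $r$ twists, so that a single point of $\ZZ^r$ records the entire state. Closely related is the reduction of the quantifier over $E$: one has to check that the structured family of test divisors already realizes the worst case, so that the many a priori separate effectivity conditions collapse to the single condition $p_i\in\mathcal{C}$. I expect the bookkeeping here---tracking how a chip crossing a loop interacts with the congruence classes modulo $\ell_i+m_i$---to be where the genericity assumption does all of its work and where the technical heart of the proof lies.
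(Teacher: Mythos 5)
The first thing to say is that the paper contains no proof of this proposition: it is imported verbatim, with citation, from \cite[Thm.~4.6]{tropicalBN}, and all of Section~\ref{Sec:ChainOfLoops} builds on it as a black box. So the only meaningful comparison is with the proof in that reference. In outline, your strategy does follow that proof: work with the Baker--Norine rank through reduced divisors, process the loops $\gamma_1,\dots,\gamma_g$ from left to right, keep a point of $\ZZ^r$ as the running state, and derive the three kinds of steps from the elementary computation of linear equivalence on a single circle (every degree-one divisor class on a circle contains a unique point, so $d\cdot v_i$ plus a chip at $x_i$ pushes forward to $d$ chips at $w_i$ plus one stranded chip, which fuses into $(d+1)\cdot w_i$ exactly when $x_i$ sits at the position $(d+1)m_i$ modulo $\ell_i+m_i$). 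You also correctly identify that the genericity hypothesis on $\ell_i/m_i$ is what rules out accidental coincidences among these congruences for all relevant degrees.

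However, the two points you set aside as ``obstacles'' are not bookkeeping to be filled in later; they are the theorem, and your outline does not contain the ideas needed to close them. First, on a metric graph the effective divisors $E$ of degree $r$ form a continuum, so replacing the quantifier over all $E$ by a ``structured family'' is precisely what must be proved, not assumed; in the cited proof this is done by running the reduction of $D-E$ across the loops for an \emph{arbitrary} $E$, analyzing how chips of $E$ on each loop can interact with the stranded chip (alternatively one can appeal to the theory of rank-determining sets), and this adversary analysis is exactly where the strict inequalities defining $\mathcal{C}$ earn their keep. Second, and more seriously, your interpretation of the coordinates involves only twists by divisors of degree $0,\dots,r-1$ supported on the already-processed side of the separating node, whereas rank at least $r$ concerns degree-$r$ divisors whose chips may be split across both sides of that node, including chips at $w_g$. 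Your one sentence asserting that ``mutual consistency of consecutive profiles'' forces $p_i(0)>\cdots>p_i(r-1)>0$ is the entire necessity direction: to show that the rank is less than $r$ when the path leaves $\mathcal{C}$, one must exhibit an explicit effective $E$ of degree $r$---some chips placed at the stranded-chip positions on the first loops so as to exhaust the supply recorded by $p_i$, the remaining chips placed beyond the point of failure---and prove $|D-E|=\emptyset$. Your proposal constructs no such witnesses, so the ``only if'' half of the equivalence is not addressed by the plan at all.
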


\noindent The steps where $p_i-p_{i-1} = 0$ are called \textit{lingering steps},
and the number of lingering steps is at most the Brill-Noether number
$\rho(g,r,d)$ from~(\ref{Eq:Rho}).

Our representation of divisors is compatible with
the Abel-Jacobi map, as follows.
We define an orientation on $\Gamma$ by
orienting each of the loops $\gamma_i$ counter-clockwise, and define a basis of
1-forms on $\Gamma$ by setting $\omega_i = d\gamma_i$.  We then have the
Abel-Jacobi map~\cite[Sec. 6]{MikhalkinZharkov08}:
\begin{equation*}
AJ\colon \Pic_d ( \Gamma ) \to \Jac ( \Gamma ) = \prod_{i=1}^g \mathbb{R} /(m_i + \ell_i ) \mathbb{Z},
\end{equation*}
given by
\begin{equation*}
AJ\bigg(\sum_{j=1}^d p_j \bigg) := \sum_{i=1}^g \bigg( \sum_{j=1}^d
\int_{v_1}^{p_j} \omega_i \bigg) e_i ,
\end{equation*}
where $e_i$ denotes the $i$th standard basis vector in $\mathbb{R}^g$.  Specifically, the Abel-Jacobi map sends the divisor corresponding to the data $(d_0
, x_1, \ldots, x_g)$ to the point
\begin{equation*}
\sum_{i=1}^g (n_i m_i + x_i)e_i \in \prod_{i=1}^g \mathbb{R} /(m_i + \ell_i ) \mathbb{Z},
\end{equation*}
where
$n_i = \# \{ j \in \mathbb{Z} \mid i<j\leq g, x_j \neq 0 \} . $  Together with Proposition~\ref{Prop:LLP}, this
tells us that $W^r_d ( \Gamma )$ is a union of translates of the images of the
coordinate $\rho$-planes in $\mathbb{R}^g$, one for each lingering lattice path with $\rho$ lingering steps.  Given such a path, if the $i$th step is not lingering then the $i$th coordinate is fixed at $(p_{i-1} (j)+1+n_i)m_i$, while the remaining $\rho$ coordinates corresponding to lingering steps are allowed to move freely.
This is illustrated for
$(g,r,d)=(3,0,2)$ in Figure~\ref{fig:cube}.

\begin{figure}
\includegraphics{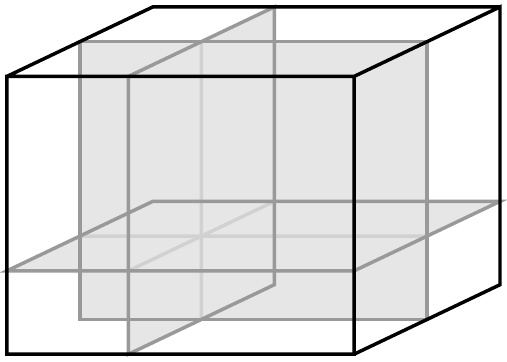}
\caption{The theta divisor in $\Pic^2(\Gamma)$, where $\Gamma$ is the chain of
$3$ loops. This theta divisor is the set of all effective divisors classes of
degree~$2$. Here $\Pic^2(\Gamma) \equiv (\RR/\ZZ)^3$ is obtained by identifying opposite faces of the pictured
cube.}\label{fig:cube}
\end{figure}

\subsection{Vertex avoiding divisors}

The description of $W^r_d(\Gamma)$ above identifies this tropical Brill-Noether locus with a union of subtori of dimension $\rho$, corresponding to lingering lattice paths with $\rho$ lingering steps.  Our analysis of lifting divisor classes is simplest away from the pairwise intersections of
these tori. The divisors in the complement of these intersections are called vertex avoiding, and have the following characterization.

\begin{definition}
A divisor class $[D]$ on $\Gamma$ of rank $r$ is \emph{vertex avoiding} if the following conditions hold:
\begin{enumerate}
\item  the associated lingering lattice path has exactly $\rho$ lingering steps,
\item  $x_i \neq m_i$ mod $\ell_i + m_i$ for any $i$, and
\item  $x_i \neq (p_{i-1}(j))m_i$ mod $\ell_i + m_i$ for any $i$.
\end{enumerate}
\end{definition}

The set of vertex avoiding divisor classes of degree $d$ and rank $r$ form a dense open subset of $W^r_d ( \Gamma )$.  In particular, if
$\rho(g,r,d)$ is zero then every divisor class in $W^r_d(\Gamma)$ is vertex avoiding.  Such divisors have the following nice property.

\begin{proposition}
\label{Prop:VertexAvoiding}
Let $[D]$ be a vertex avoiding divisor class of rank $r$ on $\Gamma$.  Then for each $0 \leq j \leq r$, there exists a unique divisor $D_j \sim D$ such that $D_j - jv_1 - (r-j)w_g$ is
effective. Moreover,
\begin{enumerate}
\item  $D_j - jv_1 - (r-j)w_g$ has no points on any of the bridges $\br_i$ or vertices $w_i$.
\item  For $j<r$, $D_j$ fails to have a point on $\gamma_i$ if and only if the $i$th step of the associated lingering lattice path is in the direction
    $e_j$.
\item  $D_r$ fails to have a point on $\gamma_i$ if and only if the $i$th step of the associated lingering lattice path is in the direction $(-1, \ldots
    , -1)$.
\end{enumerate}
\end{proposition}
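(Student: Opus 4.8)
The plan is to convert the statement into a uniqueness assertion about effective divisors and then settle it with the theory of $v_1$-reduced divisors together with the tropical Abel--Jacobi map. Existence is immediate from the definition of rank: since $[D]$ has rank $r$ and $jv_1 + (r-j)w_g$ is effective of degree $r$, the class $[D - jv_1 - (r-j)w_g]$ contains an effective divisor $E_j$, and setting $D_j = E_j + jv_1 + (r-j)w_g$ produces a divisor equivalent to $D$ with $D_j - jv_1 - (r-j)w_g = E_j \geq 0$. Conversely, any admissible $D_j$ arises in this way from an effective divisor in the fixed class $[D - jv_1 - (r-j)w_g]$, so the whole proposition reduces to showing that this class has a \emph{unique} effective representative and to identifying its shape.

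To locate $E_j$ I would take its $v_1$-reduced representative and read it off from the Abel--Jacobi map, using that $AJ$ identifies $\Pic_{d-r}(\Gamma)$ with $\Jac(\Gamma) = \prod_i \RR/(m_i + \ell_i)\ZZ$. Writing a $v_1$-reduced divisor as $d_0' v_1 + \sum_i q_i$, where $q_i$ is a single chip on $\gamma_i$ at counterclockwise distance $x_i'$ (or no chip), its $i$th Abel--Jacobi coordinate is $n_i' m_i + x_i'$ with $n_i'$ counting the loops to the right of $\gamma_i$ that carry a chip. One can therefore match the constraint coordinate by coordinate, running from $\gamma_g$ down to $\gamma_1$: the data already fixed to the right determines $n_i'$, and the resulting congruence pins down $x_i'$ in $\RR/(m_i + \ell_i)\ZZ$, while the decision of whether $\gamma_i$ carries a chip at all is dictated by the direction of the $i$th step of the lingering lattice path, which is exactly what yields conditions (2) and (3). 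A useful anchor is the extreme case: because $p_0 \in \mathcal{C}$ forces $d_0 - r + 1 > 0$, one has $d_0 \geq r$, and $D_r$ is simply the $v_1$-reduced representative of $[D]$, whose missing loop-chips sit precisely at the $(-1,\dots,-1)$-steps.

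The vertex avoiding hypothesis is exactly what makes this procedure unambiguous and the answer rigid. The congruences $x_i \neq m_i$ and $x_i \neq p_{i-1}(j)m_i \bmod \ell_i + m_i$ guarantee that the forced position $x_i'$ never lands on $v_i$ or $w_i$, so the reconstructed $E_j$ meets each loop in its interior and avoids the bridges $\br_i$ and the vertices $w_i$, which gives (1); the requirement of exactly $\rho$ lingering steps keeps the bookkeeping of which loops carry chips consistent. I expect the genuine obstacle to be uniqueness, namely showing that the linear system $|D - jv_1 - (r-j)w_g|$ is a single reduced point rather than a positive-dimensional family: \emph{a priori} a competing effective representative could redistribute chips among the loops, and the Abel--Jacobi identification by itself does not forbid this, since many effective divisors can lie over one point of $\Jac(\Gamma)$. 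I would rule this out by a rigidity argument adapted to the chain: after checking that $E_j$ is reduced with respect to both $v_1$ and $w_g$, run Dhar's burning algorithm from each of these extreme vertices and use the non-vertex conditions to verify that the fire spreads across every loop and bridge, so that no nontrivial set remains to be fired. This forbids any chip-firing move producing a distinct effective divisor, establishing that $E_j$, and hence $D_j$, is unique.
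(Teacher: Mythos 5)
Your existence argument and the Abel--Jacobi bookkeeping behind (2) and (3) are sound in outline (the paper itself gives no details here: its proof is a one-line reference to the $\rho = 0$ case in the Gieseker--Petri paper). The genuine gap is exactly where you predicted it: uniqueness, and your proposed mechanism for it does not work as stated. Checking that $E_j$ is reduced at $v_1$ and at $w_g$ via Dhar's algorithm rules out fireable sets that \emph{avoid} $v_1$ and fireable sets that \emph{avoid} $w_g$; it says nothing about fireable sets containing \emph{both} points, and those are precisely how uniqueness fails in examples. Concretely, on a circle with two marked points $v$ and $w$, the divisor $E = v + w$ is both $v$-reduced and $w$-reduced, yet it moves in a one-parameter family of effective divisors: firing either closed arc joining $v$ to $w$ (boundary multiplicities $1 \geq$ outdegree $1$) produces distinct effective representatives. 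So ``reduced at the two ends of the chain'' does not imply rigid, and the inference ``no nontrivial set remains to be fired'' is not justified by the two burnings you run.

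To close the gap you must use the specific shape of $E_j$: at most one chip per loop, each in the interior of a loop edge, none on bridges or at vertices $v_i, w_i$. Then $\Gamma \setminus \mathrm{supp}(E_j)$ is \emph{connected} (removing one interior point from each loop does not disconnect the chain), and for any fireable closed set $A$ one has $\partial A \subseteq \mathrm{supp}(E_j)$, so $\Gamma\setminus\mathrm{supp}(E_j)$ is the disjoint union of the two open sets $\mathrm{int}(A)\setminus\mathrm{supp}(E_j)$ and $(\Gamma\setminus A)\setminus\mathrm{supp}(E_j)$; connectedness forces $A$ to be a finite set of chips, each of which has outdegree $2$ but multiplicity $1$, so no fireable set exists and $|E_j| = \{E_j\}$. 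Note also that this shape statement needs more than your anchor $d_0 \geq r$, which uses only $p_0 \in \mathcal{C}$: you need $d_0 = r$ exactly, and that requires the vertex-avoiding hypothesis that the path has \emph{exactly} $\rho$ lingering steps (the degree count gives $\#\{(-1,\dots,-1)\ \text{steps}\} = g - d + d_0$, while each $\#\{e_j\ \text{steps}\} \geq g-d+r$ and the total number of non-lingering steps is $(r+1)(g-d+r)$, forcing $d_0 \leq r$). This is not a cosmetic point: if $d_0$ were $r+1$ and loop $1$ carried a chip $q_1$, then $D - rv_1$ would contain $v_1 + q_1$, and firing the arc of loop $1$ between $v_1$ and $q_1$ would produce a second effective representative, so the uniqueness of $D_r$ would genuinely fail. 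Without these two ingredients --- the exact value $d_0 = r$ and an argument excluding firing sets that contain both ends --- the proof does not go through.
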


\begin{proof}
The argument is identical to the proof of \cite[Prop.~6.2]{tropicalGP}, which is the special case where $\rho(g,r,d)$ is zero.
\end{proof}

\subsection{The local structure of \texorpdfstring{$W^r_d (\Gamma)$}{Wrd(Gamma)}}

We now show that, in a neighborhood of any
vertex avoiding divisor class, we can describe $W^r_d(\Gamma)$ as an
intersection of $g - \rho$ translates of the theta
divisor $\Theta_{\Gamma} = W^0_{g-1}(\Gamma)$. More specifically, we construct $g-\rho$ divisors
of the form $E_i - E_i'$, where $E_i$ is effective of degree~$r$ and $E'_i$ is
effective of degree $g-d+r-1$, such that a divisor class $[D']$ near $[D]$
has rank $r$ if and only if $[D'-(E_i-E'_i)]$ is effective for all~$i$.

We first describe the open neighborhoods which we will use in our proof. For $i
= 1, \ldots, g$, fix $p_i$ in the interior of one of the two edges connecting
$v_i$ to $w_i$. Then, if $\epsilon$ is sufficiently small, the
$\epsilon$-neighborhood of~$p_i$ consists of an interval and we define
\begin{equation*}
B(0, \epsilon) := \bigg\{ \sum_{i=1}^g q_i - \sum_{i=1}^g p_i \in \Jac ( \Gamma ) \mid d(p_i,
q_i) < \epsilon \bigg\} .
\end{equation*}

\begin{proposition}
The map sending $(q_1,\ldots, q_g)$ to $\sum_{i=1}^g q_i - \sum_{i=1}^g p_i \in
B(0, \epsilon)$
defines a homeomorphism between a product of open intervals and an open
neighborhood of zero in $\Jac (\Gamma)$.
\end{proposition}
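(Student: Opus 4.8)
The plan is to show that the displayed map factors, loop by loop, as a product of $g$ homeomorphisms of intervals, by exploiting the fact that on this particular family of divisors the Abel--Jacobi map decomposes coordinate-wise. Writing the class $\sum_{i} q_i - \sum_{i} p_i$ in $\Jac(\Gamma)$ via the Abel--Jacobi formula of Section~\ref{sec:classification}, I first compute its $j$th coordinate $\sum_{i}\int_{p_i}^{q_i}\omega_j$. Because $p_i$ and $q_i$ both lie on the $i$th loop and $d(p_i,q_i)$ is small, the two points are joined by a short arc contained in the $i$th loop, and along this arc the form $\omega_j = d\gamma_j$ vanishes identically for $j \neq i$, since $\omega_j$ is supported on the $j$th loop. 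Hence $\int_{p_i}^{q_i}\omega_j = 0$ for $j \neq i$, while $\int_{p_i}^{q_i}\omega_i = t_i$ is the signed arc length of the arc, with sign determined by the counter-clockwise orientation. Summing over $i$ yields
\[
\sum_{i=1}^g q_i - \sum_{i=1}^g p_i = \sum_{i=1}^g t_i\,e_i \in \prod_{i=1}^g \RR/(m_i+\ell_i)\ZZ,
\]
so the $i$th coordinate of the image depends on $q_i$ alone.

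Next I would observe that, once $\epsilon$ is small enough that the $\epsilon$-neighborhood of each $p_i$ is an open interval in the interior of its edge, the assignment $q_i \mapsto t_i$ is signed arc length along that edge, hence a monotone continuous bijection with continuous inverse from the interval onto $(-\epsilon,\epsilon)$. The map in question is therefore the product of these $g$ interval homeomorphisms, followed by the natural inclusion $\prod_{i=1}^g(-\epsilon,\epsilon)\hookrightarrow \prod_{i=1}^g \RR/(m_i+\ell_i)\ZZ = \Jac(\Gamma)$.

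It remains to check that this inclusion carries $\prod_i(-\epsilon,\epsilon)$ onto an open neighborhood of $0$, which holds as soon as $2\epsilon < m_i + \ell_i$ for every $i$: then each quotient map $\RR \to \RR/(m_i+\ell_i)\ZZ$ restricts to a homeomorphism on $(-\epsilon,\epsilon)$, and injectivity of the product map together with openness of its image follow at once. I expect the only delicate point to be the coordinate-wise decomposition in the first step, namely the claim that the integrals may be computed along arcs confined to the individual loops. This is precisely where the choice of each $p_i$ in the interior of an edge of the $i$th loop, together with the smallness of $\epsilon$, is used; granting it, the remaining assertions are routine.
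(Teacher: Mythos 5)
Your proof is correct, but it takes a different route from the paper's. You compute the Abel--Jacobi image explicitly: using the basis $\omega_j = d\gamma_j$, you observe that each $\omega_j$ is supported on the $j$th loop, so the short arc from $p_i$ to $q_i$ contributes only to the $i$th coordinate, giving the formula $\sum_i q_i - \sum_i p_i = \sum_i t_i e_i$ with $t_i$ the signed arc length. This exhibits the map as a product of interval homeomorphisms followed by the open embedding of the box $\prod_i (-\epsilon,\epsilon)$ into the torus (valid once $2\epsilon < m_i + \ell_i$), so injectivity and openness come for free. The paper instead reduces the proposition to injectivity of the Abel--Jacobi map on divisors of the form $\sum_i q_i$ and proves injectivity via the theory of reduced divisors: if $\sum_i q_i \sim \sum_i q'_i$, then $\sum_i q_i$ is $q'_i$-reduced (it has at most one point on each punctured loop and none on the bridges), and a reduced divisor must have positive degree at its base point whenever some equivalent effective divisor does, forcing $q_i = q'_i$ for all $i$. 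The paper's argument leverages the classification machinery of Section~\ref{sec:classification} and is shorter, but it leaves the topological upgrade from continuous bijection to homeomorphism implicit (e.g.\ via invariance of domain); your computation is more self-contained on that point and has the side benefit of producing explicit coordinates identifying $B(0,\epsilon)$ with a coordinate box, which is the picture the paper implicitly uses later when analyzing $W^r_d(\Gamma) \cap B(D,\epsilon)$ in Proposition~\ref{Prop:TropicalLocalEqns}.
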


\begin{proof}
It suffices to show that the Abel-Jacobi map sending the divisor $\sum_{i=1}^g q_i$ to its class is injective.  To see this, suppose that $\sum_{i=1}^g q_i
\sim \sum_{i=1}^g q'_i$.  The divisor $\sum_{i=1}^g q_i$ is $q'_i$-reduced, and hence has positive $q'_i$-degree if and only if $q_i = q'_i$.  It follows
that $q_i = q'_i$ for all $i$.
\end{proof}

\begin{definition}
For any divisor~$D$ of degree~$d$ on~$\Gamma$, we define
$$B(D, \epsilon) = D+B(0, \epsilon),$$ which is an open neighborhood of $D \in \Pic_d(\Gamma)$.
\end{definition}

We now return to our description of the Brill-Noether locus.

\begin{proposition}
\label{Prop:TropicalLocalEqns}
Let $[D] \in W^r_d ( \Gamma )$ be a vertex avoiding divisor class.  Then there are effective divisors $E_1 , \ldots , E_{g-\rho}$ of degree~$r$ and effective divisors $E'_1 , \ldots , E'_{g-\rho}$ of degree $g-d+r-1$ such
that for some $\epsilon > 0$,
$$ W^r_d ( \Gamma ) \cap B(D,\epsilon) = \bigcap_{i=1}^{g-\rho} [\Theta_{\Gamma} + E_i - E'_i ] \cap B(D,\epsilon) . $$
\end{proposition}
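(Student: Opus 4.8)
The plan is to play off two descriptions of the objects involved. On one hand, the Baker--Norine definition of rank expresses membership in $W^r_d(\Gamma)$ as a family of effectivity conditions, and each translate $\Theta_{\Gamma} + E_i - E'_i$ is itself an effectivity condition; on the other hand, near the vertex avoiding class $[D]$ the locus $W^r_d(\Gamma)$ is a single coordinate subtorus of dimension $\rho$, with its $g-\rho$ missing directions indexed by the non-lingering steps of the lingering lattice path. So the goal is to produce, for each non-lingering step, one translate of the theta divisor whose local equation at $[D]$ pins down exactly the corresponding coordinate.

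One inclusion is formal and global, and forces nothing. For any effective $E_i$ of degree $r$ and any effective $E'_i$ of degree $g-d+r-1$, if $[D']$ has rank at least $r$ then $[D'-E_i]$ is effective by the definition of rank, hence so is the degree $g-1$ class $[D'-E_i+E'_i]$, i.e.\ $[D'] \in \Theta_{\Gamma}+E_i-E'_i$. Thus $W^r_d(\Gamma) \subseteq \bigcap_{i}[\Theta_{\Gamma}+E_i-E'_i]$ for \emph{any} such choices, in particular on $B(D,\epsilon)$.

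The content is in the reverse inclusion, which dictates the choice of divisors. Each of the $g-\rho$ non-lingering steps lies in a direction $e_j$ with $0 \le j < r$ or in the direction $(-1,\ldots,-1)$, which I treat as the case $j=r$; I index the translates by these steps. For the non-lingering step $i$ with direction $j$, I set $E_i = j v_1 + (r-j)w_g$, so that by Proposition~\ref{Prop:VertexAvoiding} the class $D-E_i \sim D_j - jv_1-(r-j)w_g$ is represented by an effective divisor carrying one chip on the interior of every loop except the $g-d+r$ loops whose step has direction $j$; conditions (2) and (3) of the vertex avoiding hypothesis guarantee these chips avoid the vertices. I then take $E'_i$ to be one interior chip on each of those empty loops other than the $i$th, which has degree $g-d+r-1$ as required. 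Then $F_i := D - E_i + E'_i$ is a $v_1$-reduced effective divisor of degree $g-1$ with $d_0 = 0$ carrying one interior chip on every loop except the $i$th; in particular $[D] \in \Theta_{\Gamma}+E_i-E'_i$, and $F_i$ is a generic point of the top-dimensional part of $\Theta_{\Gamma}$. Now the key computation: in the Abel--Jacobi coordinates on $\Jac(\Gamma)$, moving the chip on an occupied loop changes only that loop's coordinate and does not alter the occupancy pattern, while the empty $i$th loop pins the $i$th coordinate to a fixed value. Hence for $\epsilon$ small the theta divisor near $[F_i]$, and so its translate near $[D]$, is exactly the coordinate hyperplane fixing the $i$th coordinate at its value at $[D]$. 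Because the $i$th step is non-lingering, this is precisely the equation cutting $W^r_d(\Gamma)$ down in the $i$th direction; intersecting over all $g-\rho$ non-lingering steps fixes all non-lingering coordinates, leaving the $\rho$ lingering ones free, and this subtorus equals $W^r_d(\Gamma) \cap B(D,\epsilon)$ since the vertex avoiding class $[D]$ lies on this one torus alone.

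The step I expect to be the main obstacle is the local linearity with a matching normal direction. One must verify that each $F_i$ is a smooth (locally linear) point of $\Theta_{\Gamma}$ whose tangent hyperplane is axis-aligned with the $i$th coordinate, that the vertex avoiding conditions together with the genericity of the edge lengths keep every relevant chip off the vertices so that no wall of the polyhedral structure of $\Theta_{\Gamma}$ is crossed throughout $B(D,\epsilon)$, and that a single $\epsilon$ can be chosen to work simultaneously for all $g-\rho$ translates.
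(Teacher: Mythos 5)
Your proposal is correct and follows essentially the same route as the paper: your divisors $E_i = jv_1 + (r-j)w_g$ and $E'_i$ (one interior chip on each empty loop of $D_j - jv_1 - (r-j)w_g$ other than the $i$th) are exactly the paper's choices, and the formal inclusion $W^r_d(\Gamma) \subseteq \bigcap_i [\Theta_\Gamma + E_i - E'_i]$ is argued identically. The reverse inclusion, which you phrase as each translate being locally the single coordinate hyperplane pinning the $i$th coordinate, is the same verification the paper carries out directly with reduced divisors---writing $[D'] \in B(D,\epsilon)$ as $[D - \sum_k p_k + \sum_k q_k]$ and showing that effectivity of $D' - E_i + E'_i$ forces $q_i = p_i$---and the step you flag as the main obstacle is settled by exactly the fact you cite: Proposition~\ref{Prop:VertexAvoiding}(1) keeps every chip off the vertices and bridges, so $[F_i]$ lies in the interior of a single facet of $\Theta_\Gamma$ and no wall-crossing occurs for small $\epsilon$.
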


\begin{proof}
For $0 \leq i \leq r - 1$, let $A_i \subset \{ 1, \ldots , g \}$ denote the set of steps of the associated lingering lattice path in the direction~$e_i$, and let $A_r$ be the set of steps
in the direction $(-1, \ldots , -1)$.  For $0 \leq i \leq r$ and $j \in A_i$, let $E_i = iv_1 + (r-i)w_g$ and $E'_{i,j} = \sum_{k \in A_i , k \neq j} p_k$.
We show that
$$ W^r_d ( \Gamma ) \cap B(D,\epsilon) = \bigcap_{i=0}^r \bigcap_{j \in A_i} [ \Theta_{\Gamma} +E_i - E'_{i,j} ] \cap B(D,\epsilon) . $$
It follows from the definition of the rank of a divisor that the left-hand-side
is contained in the right-hand side. Indeed,
if some divisor $D'$ has rank at least $r$, then for any~$i$, $D'-E_i$ must be
linearly equivalent to an effective divisor, and thus the same is true for
$D'-E_i+E'_{i,j}$.

It remains to show that the right-hand side is contained in the left-hand side.  Note that, by definition, any divisor class $[D'] \in B(D,\epsilon)$ is of
the form
$$ [D'] = [D - \sum_{k=1}^g p_k + \sum_{k=1}^g q_k] $$
for some $q_k$ within $\epsilon$ of~$p_i$.  We then have
$$ D'-E_i+E'_{i,j} \sim D_i -E_i + \sum_{k=1}^g q_k - \sum_{k \notin A_i} p_k - p_j . $$
We write $D'_{i,j}$ for the divisor on the right.  By Proposition~\ref{Prop:VertexAvoiding}, we know that $D_i -E_i$ fails to have a point on $\gamma_k$ if
and only if $k \in A_i$.  It follows that the restriction $D'_{i,j} \vert_{\gamma_k}$ has degree 1 if and only if $k \neq j$, and $D'_{i,j}
\vert_{\gamma_j}$ has degree 0.

More precisely, for all $k \notin A_i$, $D'_{i,j} \vert_{\gamma_k}$ is
equivalent to a point within $\epsilon$ of the point of $D_i$ on $\gamma_k$.  Similarly, for $k \in A_i$, $k \neq j$, $D'_{i,j} \vert_{\gamma_k} = q_k$.
We therefore see that, for $k \neq j$ and $\epsilon$ sufficiently small, $D'_{i,j} \vert_{\gamma_k}$ is not equivalent to either of the vertices $v_k$ or
$w_k$.  It follows that $[D'_{i,j}]$ is effective if and only if $D'_{i,j} \vert_{\gamma_j}$ is effective.
We must therefore have $q_j = p_j$.  By varying over all $i$ and $j$, we see that $q_j = p_j$ for all non-lingering steps $j$, and therefore $D' \in W^r_d
( \Gamma )$ by Proposition~\ref{Prop:LLP}.
\end{proof}

\subsection{A counting formula when \texorpdfstring{$\rho$}{rho} is positive}

When $\rho(g,r,d) = 0$, the cardinality of $W^r_d(\Gamma)$ agrees with the cardinality of $W^r_d (C)$, and both are given by
formula~(\ref{Eq:Cardinality}). We conclude this section with an analogous counting computation when $\rho(g,r,d)$ is positive.

If $C$ is any algebraic curve of genus~$g$ such that $W^r_d(C)$ is
$\rho$-dimensional, where $\rho = \rho(g,r,d)$, then by \cite[Thm.~V.1.3]{ACGH},
the class of $W^r_d (C)$ is
\begin{equation*}
w^r_d = \left(\prod_{i=0}^r \frac{i!}{(g-d+r+i)!}\right) \cdot \Theta^{g-\rho}.
\end{equation*}
It follows that, if $W^r_d (C)$ is generically reduced and $\rho$-dimensional, as is the case when $C$ is general by \cite{GriffithsHarris80}, then its
intersection with $\rho$ general translates of $\Theta$ will consist of
$$ w^r_d \cdot \Theta^{\rho} = g! \prod_{i=0}^r \frac{i!}{(g-d+r+i)!} $$
distinct points.  We now prove the analogous counting formula for intersections of $W^r_d ( \Gamma )$ with translates of the tropical theta divisor.

\begin{proposition}
\label{Prop:IntersectionNumber}
The intersection of $W^r_d ( \Gamma )$ with $\rho$ general translates of $\Theta_{\Gamma}$ consists of
$$ g! \prod_{i=0}^r \frac{i!}{(g-d+r+i)!} $$
distinct points.  Moreover, if $[D] \in W^r_d (\Gamma)$ is vertex avoiding, then the $\rho$ translates can be chosen to all contain $[D]$.
\end{proposition}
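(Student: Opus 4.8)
The plan is to exploit the product structure of $\Jac(\Gamma)$ together with the description of $W^r_d(\Gamma)$ as a union of coordinate subtori. First I would pin down the theta divisor. Since the cycles $\gamma_1,\ldots,\gamma_g$ form an orthogonal basis for the cycle space of $\Gamma$, the principal polarization is a product polarization and $\Jac(\Gamma)=\prod_{i=1}^g \RR/(m_i+\ell_i)\ZZ$ is a product of circles. Writing $z_1,\ldots,z_g$ for the corresponding coordinates, I would verify straight from the Abel--Jacobi formula that $\Theta_\Gamma=W^0_{g-1}(\Gamma)$ is the union of the $g$ coordinate hyperplane subtori $H_j=\{z_j=(g-j)m_j\}$: a $v_1$-reduced effective divisor of degree $g-1$ carrying a free chip on every loop except $\gamma_j$ has its $j$th Abel--Jacobi coordinate frozen at $(g-j)m_j$ modulo $\ell_j+m_j$ and all other coordinates free, and a dimension count shows these are the only codimension-one pieces of the effective locus. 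A general translate is then $\Theta_\Gamma+t=\bigcup_{j=1}^g\{z_j=c_j+t_j\}$ with the values $c_j+t_j$ generic.

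Next I would compute the intersection with a single top-dimensional component. By the classification, $W^r_d(\Gamma)=\bigcup_P T_P$ is a union of $\rho$-dimensional coordinate subtori indexed by lingering lattice paths $P$ with exactly $\rho$ lingering steps, and on $T_P$ the free coordinates are indexed by the set $L_P$ of lingering steps, with $|L_P|=\rho$. Restricting a general translate to $T_P$ kills every sheet $\{z_j=c_j+t_j\}$ with $j\notin L_P$, since the frozen coordinate misses the generic value, and leaves $\bigcup_{j\in L_P}\{z_j=c_j+t_j\}$, a union of $\rho$ coordinate hyperplanes inside the $\rho$-torus $T_P$. Intersecting $\rho$ general translates and distributing, a point must lie on one chosen sheet from each translate; for the resulting system to be consistent and zero-dimensional the chosen indices must run over all of $L_P$, i.e.\ form a permutation of $L_P$. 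Thus each of the $\rho!$ permutations contributes exactly one point, these are distinct for generic translates, and since general translates avoid the lower-dimensional pairwise intersections of the $T_P$, the total count is $\rho!\cdot N$, where $N$ is the number of lingering lattice paths with exactly $\rho$ lingering steps.

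It then remains to prove the combinatorial identity $\rho!\cdot N=g!\prod_{i=0}^r \frac{i!}{(g-d+r+i)!}=\lambda(g,r,d)$, and this is the step I expect to be the main obstacle. I would establish it by enumerating the lingering lattice paths with exactly $\rho$ lingering steps via a reflection or Lindström--Gessel--Viennot argument, the point being that recording an ordering of the $\rho$ lingering steps (the source of the factor $\rho!$) organizes the count so as to match the enumeration underlying the $\rho=0$ case of \cite{tropicalBN}; as a check, for $r=0$ one has $N=\binom{g}{d}$ and $\rho=d$, and indeed $d!\binom{g}{d}=\lambda(g,0,d)$. Finally, for the ``moreover'' statement I would use that a vertex-avoiding class $[D]$ lies in the interior of a unique component $T_{P_0}$. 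Choosing, for each $s=1,\ldots,\rho$, the $s$th translate so that its sheet in a prescribed free coordinate $j_s$ passes through $[D]$, that is $c_{j_s}+t^{(s)}_{j_s}=[D]_{j_s}$, with $(j_1,\ldots,j_\rho)$ a permutation of $L_{P_0}$ and every other coordinate of each $t^{(s)}$ generic, forces each translate to contain $[D]$ and exhibits $[D]$ as the intersection point corresponding to the identity permutation, while genericity of the unused coordinates preserves transversality, distinctness, and the total count $\lambda(g,r,d)$.
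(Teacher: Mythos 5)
Your geometric analysis coincides with the paper's: the theta divisor is the union of the $g$ coordinate codimension-one subtori (your identification of the frozen values $(g-j)m_j$ is correct), each $\rho$-dimensional component $T_P$ of $W^r_d(\Gamma)$ meets $\rho$ general translates of $\Theta_\Gamma$ transversally in $\rho!$ points, the components are disjoint from general translates along their pairwise intersections, and the total is $\rho!\cdot N$ where $N$ is the number of lingering lattice paths with exactly $\rho$ lingering steps. Your treatment of the ``moreover'' statement is also essentially the paper's (choose each translate so that one sheet passes through $[D]$ in a free coordinate; the paper phrases this as $\Theta_\Gamma + (D-E_i)$ with $E_i$ a sum of $g-1$ points, one on each loop other than $\gamma_{a_i}$).

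The genuine gap is the step you yourself flag as ``the main obstacle'': the identity $\rho!\cdot N = \lambda(g,r,d)$ is never proved, and the reflection or Lindstr\"om--Gessel--Viennot argument you gesture at is neither carried out nor actually needed. The paper closes this with a short bijection: a lingering lattice path with exactly $\rho$ lingering steps is the same as a choice of which $\rho$ of the $g$ steps linger (lingering steps may be inserted anywhere, since a lingering step does not move the path and so cannot leave the Weyl chamber) together with a non-lingering path on the remaining $g-\rho$ steps. The count of non-lingering paths, $\Psi(r,s) = [(r+1)(s+1)]!\prod_{i=0}^r \frac{i!}{(s+1+i)!}$ with $s = g-d+r-1$, is already available from the $\rho = 0$ theory of \cite{tropicalBN} (it is the rectangular standard Young tableaux count), and since $(r+1)(g-d+r) = g-\rho$ one gets $N = \binom{g}{\rho}(g-\rho)!\prod_{i=0}^r \frac{i!}{(g-d+r+i)!}$, whence $\rho!\,N = g!\prod_{i=0}^r \frac{i!}{(g-d+r+i)!}$ follows from $\binom{g}{\rho}\,\rho!\,(g-\rho)! = g!$. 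Without this reduction (or a completed direct enumeration), your argument only shows the intersection consists of $\rho!$ points per component of $W^r_d(\Gamma)$, with the total left undetermined, so the proposal as written does not establish the stated formula.
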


\begin{proof}
When $\rho = 0$, any lingering lattice path has no lingering steps, and the number of rank $r$ lattice paths with precisely $(r+1)(s+1)$ steps and no
lingering steps is
$$ \Psi (r,s) := [(r+1)(s+1)]! \prod_{i=0}^r \frac{i!}{(s+1+i)!} . $$
We can use this to count the number of lingering lattice paths with precisely $\rho$ lingering steps.  To construct such a path, one can first choose the
$\rho$ lingering steps, and then choose a non-lingering lattice path on the remaining $g-\rho$ loops.  It follows that the number of such paths is
$$ {{g}\choose{\rho}} \Psi (r,g-d+r-1) = {{g}\choose{\rho}} (g-\rho)! \prod_{i=0}^r \frac{i!}{(g-d+r+i)!} . $$
As explained above, $W^r_d ( \Gamma )$ is a union of precisely this number of translates of $\rho$-dimensional coordinate tori.  Recall that the theta
divisor $\Theta_{\Gamma}$ is a union of translates of the $g$ coordinate $(g-1)$-dimensional tori.  A given $\rho$-dimensional coordinate torus will
therefore intersect $\rho$ general translates of $\Theta_{\Gamma}$ in $\rho !$ points.  It follows that the intersection of $W^r_d ( \Gamma )$ with such
translates is simply a union of
$$ {{g}\choose{\rho}} \rho ! \Psi (r,g-d+r-1) = g! \prod_{i=0}^r \frac{i!}{(g-d+r+i)!} $$
distinct points.

For the last statement in the theorem, fix a vertex avoiding divisor class $[D]$ in $W^r_d(\Gamma)$, and let $a_1 , \ldots , a_{\rho}$ be the lingering
steps of the lattice path associated to $D$.  For each $1 \leq i \leq \rho$, let $E_i$ be a sum of $g-1$ distinct points, one from each loop $\gamma_j$
other than $\gamma_{a_i}$.  The intersection $W^r_d (\Gamma) \cap [\Theta_{\Gamma} + (D-E_i )]$ contains $[D]$, and if the $E_i$ are chosen sufficiently
general then this intersection is transverse.
\end{proof}

\section{Lifting divisors}

In this section, we use our results on the structure of the tropical
Brill-Noether loci to prove Theorem~\ref{Thm:MainThm}. We begin by proving that the tropical theta divisor is multiplicity-free.

\subsection{Tropical multiplicities on Brill-Noether loci}

Section~\ref{sec:classification} gives an explicit description of the Brill-Noether
locus $W^0_d(\Gamma)$ with $r = 0$.  Since the tropical Jacobian of $\Gamma$ is canonically identified with the skeleton of $\Jac(C)$, and this
identification is compatible with Abel-Jacobi maps \cite{BakerRabinoff13}, we see that $W^0_d(\Gamma)$ is the tropicalization of $W^0_d(C)$.  We now
compute the tropical multiplicities on the facets of the tropical theta divisor $\Theta_{\Gamma} = W^0_{g-1}(\Gamma)$.

\begin{lemma}
\label{Lemma:MultiplicityOfTheta}
Every facet of the tropical theta divisor $\Theta_{\Gamma}$ has multiplicity 1.
\end{lemma}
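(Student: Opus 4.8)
The plan is to exhibit $\Theta_\Gamma$ as the corner locus of the \emph{tropical theta function} and to compute each facet multiplicity as a slope jump. First I would pass to the uniformizing torus: pulling back $\Theta_C$ along $T^{\an}\to\Jac(C)^{\an}$ gives the zero locus of the non-archimedean Riemann theta function $\theta$, an analytic function on $T$, and I would invoke the identification of $\trop\theta$ with the tropical theta function
$$\psi(x)=\min_{n\in\ZZ^g}\Big(\tfrac12\,{}^t n\,Q\,n+{}^t n\,x\Big)$$
on the universal cover $\RR^g$ of $\Jac(\Gamma)$, where $Q$ is the polarization of the tropical Jacobian. Here $\Theta_\Gamma$ is the corner locus of $\psi$ in the sense of \cite{MikhalkinZharkov08}, and this matches $\trop\Theta_C$ under the identification of $\Jac(\Gamma)$ with the skeleton of $\Jac(C)$ from \cite{BakerRabinoff13}.

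Next I would recall that, in the setting of Rabinoff's lifting theorem, the tropical multiplicity attached to a facet $F$ of $\trop V(\theta)=\trop\Theta_C$ is the lattice length of the jump in the gradient $\nabla(\trop\theta)=\nabla\psi$ as one crosses $F$. Equivalently it is the weight making $\Theta_\Gamma$ a balanced tropical cycle, and it agrees with the multiplicity of the corresponding component of the initial degeneration of $\Theta_C$. Thus the problem reduces to showing that $\nabla\psi$ changes by a primitive lattice vector across every facet.

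The key observation is that the polarization $Q$ of the chain of loops is diagonal. Indeed, $Q$ is the Gram matrix of the cycles $\gamma_1,\dots,\gamma_g$ with respect to the edge-length pairing on $H_1(\Gamma,\RR)$, and since distinct loops of $\Gamma$ share no edges, $Q=\mathrm{diag}(\ell_1+m_1,\dots,\ell_g+m_g)$. Consequently $\psi$ separates as $\psi(x)=\sum_{i=1}^g\psi_i(x_i)$ with $\psi_i(t)=\min_{n\in\ZZ}\big(\tfrac12(\ell_i+m_i)n^2+nt\big)$. As $t$ traverses one period $\ell_i+m_i$ the optimal $n$ decreases by exactly one, so the corner locus of each convex piecewise-linear $\psi_i$ is a single point of the circle $\RR/(\ell_i+m_i)\ZZ$, across which the slope jumps from $n$ to $n+1$. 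Hence the corner locus of $\psi$ is the union of the $g$ coordinate hyperplanes, and across the $k$-th one the gradient $\nabla\psi$ jumps by $e_k$. Since $e_k$ is primitive, every facet has multiplicity $1$; this also recovers the count of exactly $g$ facets implicit in the $\rho=g-1$ case of Proposition~\ref{Prop:IntersectionNumber}.

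The main obstacle is the setup of the first paragraph rather than the final computation: I must correctly identify the pullback of $\Theta_C$ with the zero locus of the non-archimedean theta function and verify that $\trop\theta=\psi$ with the stated $Q$, so that the combinatorial slope data genuinely computes the tropical multiplicities rather than merely a balanced refinement of them. Once this dictionary is in place, the diagonal form of $Q$ makes the rest routine; in particular the translation of $\Theta_\Gamma$ by a theta characteristic only shifts the location of the facets and leaves the slope jumps unchanged, so it can be safely ignored.
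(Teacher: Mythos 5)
Your route is genuinely different from the paper's, and its combinatorial core is sound: granting that the pullback of $\Theta_C$ to the uniformizing torus is cut out by a theta series whose tropicalization is $\psi(x)=\min_{n\in\ZZ^g}\bigl(\tfrac12\,{}^t n\,Q\,n+{}^t n\,x\bigr)$ with $Q$ the edge-length Gram matrix, the diagonality $Q=\mathrm{diag}(\ell_1+m_1,\dots,\ell_g+m_g)$ does split $\psi$ into one-variable pieces whose slope jumps are exactly $1$, and translation by the Riemann constant is indeed harmless. The paper argues quite differently and more indirectly: it already knows from Propositions~\ref{Prop:LLP} and~\ref{Prop:IntersectionNumber} that $\Theta_\Gamma$ is a union of translated coordinate tori and that $g$ general translates of it meet in exactly $g!$ distinct points, each with tropical multiplicity $m$ equal to a product of facet multiplicities; Rabinoff's lifting theorem then says $m\cdot g!$ equals the algebraic intersection number $\Theta_C^g$, which is $g!$ because $\Theta_C$ is a principal polarization, forcing $m=1$. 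That argument needs only the set-theoretic identification $\trop(\Theta_C)=\Theta_\Gamma$ coming from \cite{BakerRabinoff13}, and no information whatsoever about theta functions; yours needs no intersection count, but needs the full function-level dictionary.

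That dictionary is where your proof has a genuine gap, and you correctly sense it: the claims that the pullback of $\Theta_C$ to $T^{\an}$ is the zero locus of a non-archimedean Riemann theta function, that $\trop\theta=\psi$, and that the $Q$ appearing there is the polarization matrix of $\Gamma$ under the skeleton identification, together constitute essentially the entire content of the lemma in your approach, and you only ``invoke'' them. This is not a formality: it requires the Mumford--Gerritzen uniformization of $\Jac(C)$, an explicit theta series with coefficient valuations $\tfrac12\,{}^t n\,Q\,n$, a non-archimedean Riemann theorem identifying its divisor with a translate of $W^0_{g-1}(C)$, and compatibility of all of this with the identification of $\Jac(\Gamma)$ with the skeleton of $\Jac(C)^{\an}$; a complete treatment of this correspondence appeared in the literature only after this paper (in work of Foster, Rabinoff, Shokrieh, and Soto), so it cannot simply be cited here as known. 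Note also that your proposed fallback characterization --- that the facet multiplicities are ``the weights making $\Theta_\Gamma$ a balanced tropical cycle'' --- fails in this situation: the facets of $\Theta_\Gamma$ are parallel translates of coordinate hyperplanes, so \emph{any} constant weights satisfy balancing, and balancing determines nothing. Thus your argument stands or falls with the theta-function dictionary; once that is supplied it gives a clean direct computation, but as written the hardest step is missing, and it is precisely the step the paper's counting argument is designed to avoid.
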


\begin{proof}
As a consequence of Proposition~\ref{Prop:LLP}, $\Theta_{\Gamma}$ consists of translates of the $g$ coordinate codimension $1$ tori in $\Jac (\Gamma)$, and
each of these carries a positive integer multiplicity.  By Proposition \ref{Prop:IntersectionNumber}, the intersection of $g$ general translates of
$\Theta_{\Gamma}$ consists of precisely $g!$ distinct points, and each of these points is endowed with a tropical intersection multiplicity $m$ equal to
the product of the multiplicities of the facets.  By Rabinoff's lifting theorem~\cite{Rabinoff12}, applied on the universal cover of $\Jac(\Gamma)$, $m
\cdot g!$ is the number of intersection points, counted with multiplicity, in the intersection of $g$ general translates of $\Theta_C \subset \Jac(C)$.
Because the theta divisor provides a principal polarization of the Jacobian, the intersection number $\Theta^g$ is $g!$.  It follows that $m$ is $1$, and
hence every facet of $\Theta_{\Gamma}$ has multiplicity $1$.
\end{proof}

\subsection{Local equations}

We begin by proving
an analogue of the local equations from
Proposition~\ref{Prop:TropicalLocalEqns}, but for algebraic curves. In
particular, if $D$ and~$E$ are effective divisors on an algebraic curve~$C$ then $r(D) - \deg (E) \leq r(D-E) \leq r(D)$, which gives a set-theoretic
containment of the corresponding
Brill-Noether loci. Our next result shows that this containment is in fact
scheme-theoretic.

\begin{proposition}\label{Prop:Containments}
Let $C$ be a curve.  Fix integers $d$ and $r$ and an effective divisor $E$ of
degree $e$.  Let $\varphi \colon \Pic_{d-e}(C) \to \Pic_d(C)$ be the isomorphism defined by sending $[D]$ to $[D+E]$.  Then there is a chain of inclusion of subschemes
$$ W^{r+e}_d (C) \subseteq \varphi ( W^r_{d-e} (C)) \subseteq W^r_d (C) . $$
\end{proposition}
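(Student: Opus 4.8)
The plan is to realize both Brill--Noether loci through their determinantal scheme structure, as in Chapter~IV of \cite{ACGH}, and then to derive the two inclusions from monotonicity properties of Fitting ideals. Fix a Poincar\'e bundle $\mathcal{L}$ on $C \times \Pic_d(C)$ (this requires a rational point, or may be arranged \'etale locally after a twist by a line bundle pulled back from the base, which affects none of the subschemes below) and an effective divisor $F$ on $C$ of large degree $m$ whose support is disjoint from that of $E$. Writing $\pi$ for the projection to $\Pic_d(C)$ and $\widehat F = F \times \Pic_d(C)$, pushing forward $0 \to \mathcal{L} \to \mathcal{L}(\widehat F) \to \mathcal{L}(\widehat F)|_{\widehat F} \to 0$ produces a map of vector bundles $\gamma \colon \mathcal{E}^0 \to \mathcal{E}^1$ whose fiberwise kernel over $[L]$ is $H^0(L)$; here $m$ is taken large enough that the relevant pushforwards are vector bundles commuting with base change. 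Dualizing, $\mathcal{H} := \operatorname{coker}(\gamma^\vee)$ has fiber $H^0(L)^\vee$ at $[L]$, and the ACGH structure on $W^s_d(C)$ coincides with $V(\operatorname{Fitt}_s \mathcal{H})$, the locus where $\dim H^0(L) > s$. Running the identical construction with $\mathcal{L}$ replaced by $\mathcal{L}(-\widehat E)$---the Poincar\'e bundle for degree $d-e$, transported to $\Pic_d(C)$ by $\varphi$---gives a map $\gamma' \colon \mathcal{F}^0 \to \mathcal{F}^1$ and a sheaf $\mathcal{H}' := \operatorname{coker}((\gamma')^\vee)$ with fiber $H^0(L(-E))^\vee$, so that $\varphi(W^r_{d-e}(C)) = V(\operatorname{Fitt}_r \mathcal{H}')$.

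The comparison of the two constructions comes from multiplication by the section $s_E$ cutting out $E$, which gives $\mathcal{L}(-\widehat E) \hookrightarrow \mathcal{L}$ and hence a commutative ladder between the two short exact sequences. Because $F$ is disjoint from $E$, the section $s_E$ is invertible along $\widehat F$, so the induced map $b\colon \mathcal{F}^1 \to \mathcal{E}^1$ on third terms is an isomorphism, while on second terms we get the inclusion $a \colon \mathcal{F}^0 \hookrightarrow \mathcal{E}^0$ of a subbundle of corank $e$. Thus, under the identification $b$, the map $\gamma'$ is precisely the restriction of $\gamma$ to $\mathcal{F}^0$. Dualizing the subbundle surjection $a^\vee \colon (\mathcal{E}^0)^\vee \twoheadrightarrow (\mathcal{F}^0)^\vee$ and passing to cokernels yields a map $\bar a \colon \mathcal{H} \to \mathcal{H}'$ which is fiberwise the dual of the injection $H^0(L(-E)) \hookrightarrow H^0(L)$, hence surjective on every fiber and therefore, by Nakayama, surjective as a map of sheaves. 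Its kernel $\mathcal{K}$ is a quotient of $\ker a^\vee = (\mathcal{E}^0/\mathcal{F}^0)^\vee$, a vector bundle of rank $e$, and is in particular generated by at most $e$ elements at every point.

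Both inclusions now follow from standard facts about Fitting ideals. A surjection of finitely generated modules $M \twoheadrightarrow N$ satisfies $\operatorname{Fitt}_s(M) \subseteq \operatorname{Fitt}_s(N)$ for all $s$; applied to $\bar a$ with $s = r$ this gives $\operatorname{Fitt}_r(\mathcal{H}) \subseteq \operatorname{Fitt}_r(\mathcal{H}')$, whence $\varphi(W^r_{d-e}(C)) = V(\operatorname{Fitt}_r \mathcal{H}') \subseteq V(\operatorname{Fitt}_r \mathcal{H}) = W^r_d(C)$, the right-hand inclusion. For the left-hand inclusion, the short exact sequence $0 \to \mathcal{K} \to \mathcal{H} \to \mathcal{H}' \to 0$ gives the containment $\operatorname{Fitt}_e(\mathcal{K}) \cdot \operatorname{Fitt}_r(\mathcal{H}') \subseteq \operatorname{Fitt}_{r+e}(\mathcal{H})$; since $\mathcal{K}$ is generated by at most $e$ elements we have $\operatorname{Fitt}_e(\mathcal{K}) = \mathcal{O}$, so $\operatorname{Fitt}_r(\mathcal{H}') \subseteq \operatorname{Fitt}_{r+e}(\mathcal{H})$, and therefore $W^{r+e}_d(C) = V(\operatorname{Fitt}_{r+e}\mathcal{H}) \subseteq V(\operatorname{Fitt}_r \mathcal{H}') = \varphi(W^r_{d-e}(C))$.

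The underlying set-theoretic containments merely record the inequalities $h^0(L(-E)) \leq h^0(L) \leq h^0(L(-E)) + e$, so the real content is the passage to schemes, and this is where I expect the one genuine subtlety to lie. The crucial device is the choice of $F$ disjoint from the support of $E$: it is exactly this that makes $\gamma'$ an honest restriction of $\gamma$ to a subbundle, so that $\bar a$ is surjective with kernel visibly a quotient of a rank-$e$ bundle, and hence $e$-generated at every point rather than only generically. Were the restriction-to-$F$ map not an isomorphism, the kernel sheaf could develop larger fibers and the identity $\operatorname{Fitt}_e(\mathcal{K}) = \mathcal{O}$ would break. The remaining points to verify are routine: that $m$ is large enough for the determinantal model to be valid, and that $W^s_d(C)$ is independent of the choices of $\mathcal{L}$ and $F$, which holds because twisting $\mathcal{L}$ by a line bundle from the base multiplies each Fitting ideal by an invertible sheaf and so preserves the subscheme it defines.
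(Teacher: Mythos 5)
Your proof is correct, and its geometric core coincides with the paper's: both arguments realize $W^r_d(C)$ and $\varphi(W^r_{d-e}(C))$ via the ACGH determinantal construction with a Poincar\'e bundle and an auxiliary divisor $F$ of large degree, and both compare the two constructions through multiplication by the section cutting out $E$, i.e.\ the inclusion $\mathcal{L}(F-E) \hookrightarrow \mathcal{L}(F)$, which exhibits one presenting map as the restriction of the other to a corank-$e$ subbundle. Where you diverge is the algebraic endgame. The paper works locally, splits $\nu_*\mathcal{L}(F) \cong U \oplus V$ with $\dim V = e$, and gets both inclusions at once from the elementary containments of minor ideals $I_{k+e}(U \oplus V \to W) \subseteq I_k(U \to W) \subseteq I_k(U \oplus V \to W)$, the first by Laplace expansion along the $V$-columns. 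You instead dualize, form the cokernel sheaves $\mathcal{H}$ and $\mathcal{H}'$ with fibers $H^0(L)^\vee$ and $H^0(L(-E))^\vee$, and invoke the standard Fitting-ideal calculus: monotonicity under the surjection $\bar a$ gives the right-hand inclusion, and the product formula for $0 \to \mathcal{K} \to \mathcal{H} \to \mathcal{H}' \to 0$ combined with $\operatorname{Fitt}_e(\mathcal{K}) = \mathcal{O}$ (valid because $\mathcal{K}$ is a quotient of the rank-$e$ bundle $(\mathcal{E}^0/\mathcal{F}^0)^\vee$) gives the left-hand one. Since those Fitting lemmas are themselves proved by exactly the minor expansions the paper carries out by hand, the two proofs have the same mathematical content; yours is coordinate-free and outsources the matrix work to citable lemmas, while the paper's is self-contained linear algebra. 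Two places where you are actually more careful than the paper: you state explicitly that $F$ must be chosen disjoint from $E$, which is what legitimizes the identification $\mathcal{L}(F-E)|_F = \mathcal{L}(F)|_F$ (equivalently, your isomorphism $b \colon \mathcal{F}^1 \to \mathcal{E}^1$) that the paper asserts without comment; and you flag the rational-point issue in the existence of a Poincar\'e bundle, which the paper's statement for an arbitrary curve quietly elides.
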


\begin{proof}
We use the following scheme-theoretic description of Brill-Noether loci, the details of which can be found in Chapter IV.3 of \cite{ACGH}.  Choose
an auxiliary effective divisor $F$ of degree $m \geq 2g-d+e+1$, and let $\mathcal{L}$ be a Poincar{\'e} line bundle on $\Pic_d (C) \times C$.  We then have
an exact sequence of sheaves
$$ 0 \to \mathcal{L} \to \mathcal{L}(F) \to \mathcal{L}(F) \vert_F \to 0. $$
If $\nu \colon \Pic_d(C) \times C \to \Pic_d(C)$ is the projection map, then $W^r_d(C)$ is the determinantal variety defined by the $(m+d-g+1-r) \times
(m+d-g+1-r)$ minors of the map of vector bundles $\nu_* \mathcal{L}(F) \to \nu_* \mathcal{L}(F) \vert_F$.

Since $\varphi$ is an isomorphism, and $( \varphi \times \mathrm{id})^* \mathcal{L}(-E)$ is a Poincar{\'e} line bundle on $\Pic_{d-e}(C) \times C$, the ideal of
$\varphi (W^r_{d-e} (C))$ is defined by the $(m+d-e-g+1-r) \times (m+d-e-g+1-r)$ minors of $\nu_* \mathcal{L}(F-E) \to \nu_* \mathcal{L}(F-E) \vert_F$.
Note that $\mathcal{L}(F-E) \vert_F = \mathcal{L}(F) \vert_F$ and also that there is an injection $\mathcal{L}(F-E) \to \mathcal{L}(F)$, which pushes
forward to an injection.  Putting these together, we have an exact sequence
\begin{equation}\label{Eq:Pushforward}
0 \to \nu_* \mathcal{L}(F-E) \to \nu_* \mathcal{L}(F) \to \nu_* \mathcal{L}(F) \vert_F ,
\end{equation}
where $W^r_d (C)$ is defined by the minors of the last map, and $\varphi (W^r_{d-e} (C))$ is defined by the minors of the composition.

To finish the proof, we may work locally, so we replace~(\ref{Eq:Pushforward}) with an exact sequence of vector spaces:
$$ 0 \to U \to U \oplus V \to W . $$
Note that $U$ has dimension $d+m-g-e+1$ and $V$ has dimension $e$.  Consider the matrix representing the map $U \oplus V \to W$.  The ideal of its $(k+e)
\times (k+e)$ minors is contained in the ideal of $k \times k$ minors of the map $U \to W$, which is contained in the ideal of $k \times k$ minors of the
map $U \oplus V \to W$.  Taking $k = m+d-g-e+1-r$, gives the desired inclusion of schemes.
\end{proof}

\begin{corollary}
\label{Cor:LocalEqns}
Fix a curve $C$ and integers $d,r \geq 0$.  Let $E$ and $E'$ be effective divisors of degree $r$ and $g-1-d+r$, respectively.  Let $\varphi$
be the map from $\Pic_d (C)$ to $\Pic_{g-1} (C)$ taking $[D]$ to $[D-E+E']$.  Then $\varphi (W^r_d (C)) \subseteq \Theta_C$.
\end{corollary}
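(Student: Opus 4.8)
The plan is to deduce the corollary from two applications of Proposition~\ref{Prop:Containments}, using it first to peel off the divisor $E$ and then to absorb the divisor $E'$. I would begin with the degree bookkeeping: since $\deg E = r$ and $\deg E' = g-1-d+r$, the map $\varphi\colon [D] \mapsto [D-E+E']$ changes degree by $g-1-d$, so it carries $\Pic_d(C)$ isomorphically onto $\Pic_{g-1}(C)$, which is where $\Theta_C = W^0_{g-1}(C)$ lives.

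For the first step, I apply Proposition~\ref{Prop:Containments} with the divisor $E$ of degree $r$ and with rank parameter $0$ in place of its ``$r$''. Writing $\varphi_E\colon \Pic_{d-r}(C) \to \Pic_d(C)$ for the translation $[D'] \mapsto [D'+E]$, the first inclusion in the chain reads
$$ W^r_d(C) = W^{0+r}_d(C) \subseteq \varphi_E\big(W^0_{d-r}(C)\big), $$
an inclusion of subschemes of $\Pic_d(C)$.

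Next I push this inclusion forward along the isomorphism $\varphi$. Because $\varphi$ is an isomorphism of schemes it preserves scheme-theoretic containments, so $\varphi(W^r_d(C)) \subseteq \varphi(\varphi_E(W^0_{d-r}(C)))$. A direct computation of divisor classes shows that $\varphi \circ \varphi_E$ is exactly the translation $\varphi_{E'}\colon \Pic_{d-r}(C) \to \Pic_{g-1}(C)$, $[D'] \mapsto [D'+E']$, since $[(D'+E)-E+E'] = [D'+E']$. Finally I apply Proposition~\ref{Prop:Containments} a second time, now with target degree $g-1$, the divisor $E'$ of degree $g-1-d+r$, and rank parameter $0$; since $(g-1)-\deg E' = d-r$, its \emph{second} inclusion gives $\varphi_{E'}(W^0_{d-r}(C)) \subseteq W^0_{g-1}(C) = \Theta_C$. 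Chaining these yields $\varphi(W^r_d(C)) \subseteq \Theta_C$.

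The only real content beyond bookkeeping is that all of these containments must hold \emph{scheme-theoretically}, not merely set-theoretically; the set-theoretic version is just the elementary inequality $r(D) - \deg E \leq r(D-E)$. That strengthening is precisely what Proposition~\ref{Prop:Containments} supplies, so the task reduces to invoking it with the correct parameters and checking that translating by an isomorphism preserves the inclusions. The step I expect to require the most care is matching indices across the two invocations and verifying the identity $\varphi \circ \varphi_E = \varphi_{E'}$, so that the two applications genuinely compose.
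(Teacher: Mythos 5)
Your proof is correct and is essentially the paper's own argument: the paper states this as an immediate consequence of Proposition~\ref{Prop:Containments}, and your two invocations of it with rank parameter $0$ (the first inclusion for $E$, giving $W^r_d(C) \subseteq \varphi_E(W^0_{d-r}(C))$, and the second inclusion for $E'$, giving $\varphi_{E'}(W^0_{d-r}(C)) \subseteq \Theta_C$), chained through the identity $\varphi \circ \varphi_E = \varphi_{E'}$, are exactly the details the paper leaves implicit.
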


\subsection{Proof of main theorem}

We now prove Theorem~\ref{Thm:MainThm}, using
the lifting theorem from~\cite{Rabinoff12} for $0$-dimensional complete intersections.  When $\rho(g,r,d)$ is positive, we intersect $W^r_d(\Gamma)$ with
$\rho(g,r,d)$ translates of the theta divisor and use the counting formula from
Proposition~\ref{Prop:IntersectionNumber}.

\begin{proof}[Proof of Theorem~\ref{Thm:MainThm}]
We first consider the case where $[D] \in W^r_d(\Gamma)$ is a vertex avoiding divisor class and $K$ is algebraically closed.
By Proposition \ref{Prop:IntersectionNumber}, there exist divisors $E_{1},
\ldots, E_{\rho}$ of degree $d-g+1-r$ such that $[D]$ is contained in the
intersection
$$ X = \bigcap_{i=1}^{\rho} [\Theta_{\Gamma} + E_i ] \cap W^r_d ( \Gamma ) $$
and
$$ \vert X \vert = g! \prod_{i=0}^r \frac{i!}{(g-d+r+i)!} . $$
We let $\mathcal E_i$ be a divisor on~$C$ which tropicalizes to
$E_i$. We define $\mathcal X \subset \Pic_d(C)$ to be the intersection
$$ \mathcal{X} = \bigcap_{i=1}^{\rho} [\Theta_C + \mathcal{E}_i ] \cap W^r_d (C) , $$
which is a finite scheme because $\Trop(\mathcal X)$ is contained in $X$, which
is finite.

Now let $[D']$ be one of the finitely many points in~$X$, and we apply
Proposition~\ref{Prop:TropicalLocalEqns} to see that in a neighborhood of~$[D']$,
we can write $W^r_d(\Gamma)$ as an intersection
\begin{equation*}
\bigcap_{i=1}^{g-\rho} \Theta_{\Gamma} + [E_i' - E_i''],
\end{equation*}
where $E_i'$ and $E_i''$ are effective
divisors of degree $r$ and $d-g+1$ respectively. As before, we lift each of
these to effective divisors $\mathcal E_i'$ and $\mathcal E_i''$ on $C$.  By Corollary~\ref{Cor:LocalEqns}, $W^r_d(C)$ is contained in $\Theta_C +
[\mathcal E_i' - \mathcal E_i'']$ for each~$i$.

Near $[D']$, the hypersurfaces $\Trop(\Theta_C + [\mathcal E_i])$ and $\Trop(\Theta_C + [\mathcal E_i' - \mathcal E_i''])$
are translates of coordinate planes, all with
multiplicity~$1$ by Lemma~\ref{Lemma:MultiplicityOfTheta}. Therefore, it follows
from~\cite{Rabinoff12} that there is exactly one reduced point
tropicalizing to~$[D']$ in the intersection of
\begin{equation*}
\Theta_C + [\mathcal E_1], \ldots, \Theta_C + [\mathcal E_\rho], \mbox{ and }
\Theta_C + [\mathcal E_1' - \mathcal E_1''], \ldots, \theta_C + [\mathcal E_{g-\rho}' - \mathcal E_{g - \rho}''].
\end{equation*}
While these hypersurfaces contain $\mathcal X$, it's not true that they form a complete set of defining equations, so this only shows that tropicalization
is an injection from $\mathcal X$ to~$X$.
However, these two sets have the same cardinality, by
Proposition~\ref{Prop:IntersectionNumber} and \cite{ACGH}, so we
have a bijection. In particular, there exists a divisor class $[\mathcal D] \in
W^r_d(C)$ such that $\Trop([\mathcal D]) = [D]$.

Since $K$ is algebraically closed, the set of vertex avoiding divisors which are rational over the value group of~$K$ form a dense set of~$W^r_d(\Gamma)$, and the lifting argument above shows that each of these is in $\Trop(W^r_d(C))$.  Since $\Trop(W^r_d(C))$ is closed, it follows that $\Trop(W^r_d(C))$ is equal to $W^r_d(\Gamma)$.  Therefore, even if we drop the assumption that $[D]$ is vertex-avoiding, the preimage of any point in $W^r_d(\Gamma)$ that is rational over the value group of~$K$ is a nonempty strictly $K$-analytic domain in $W^r_d(C)^{\an}$.  Since $K$ is algebraically closed, the $K$-points are dense in this domain.

We now additionally drop our assumption that $K$ is algebraically closed.  The argument above still shows that the preimage of~$[D]$ is a nonempty strictly $K$-analytic domain $\mathcal W_{[D]}$ in $W^r_d(C)$, and it just remains to show that this domain contains a $K$-rational point.  Since $C$ has totally split reduction, the preimage of $[D]$ in $\Pic_d(C)^{\an}$ is an annulus, and is the generic fiber of a formal scheme whose special fiber is a torus $\mathbb G_m^g$ over the residue field.  Let $\mathfrak W$ be the closure of $\mathcal{W}_{[D]}$ in this formal scheme, and let $\mathfrak W_0$ be the special fiber of~$\mathfrak W$.  By construction, $\mathfrak W_0$ is a subscheme of $\mathbb G_m^g$ over the residue field, and by~\cite[Lem.~6.9.5]{Chambert-LoirDucros12}, the tropicalization $\Trop(\mathfrak W_0)$ is the local cone of $\Trop(W^r_d(C)) = W^r_d(\Gamma)$ at $[D]$.  The results of Section~\ref{sec:classification} show that this local cone is a union of $\rho$-dimensional coordinate linear subspaces, and the lifting argument above shows that each of these has multiplicity 1.

Choose one of these linear spaces and then the coordinates of the linear space define a map~$\pi$ from $\mathbb G_m^g$ to $\mathbb G_m^\rho$. Since
the facets of $\Trop(\mathfrak W_0)$ have multiplicity 1, this map has tropical degree 1, and hence $\pi$ is birational on one of the components of~$\mathfrak W_0$. Since the residue field is infinite, the rational points in $\mathbb G_m^\rho$ are Zariski dense.  In particular, there is a rational point in the dense open subset over which $\pi$ is an isomorphism.  Therefore, there is a smooth rational point in $\mathfrak W_0$, and this lifts to a $K$-point in $\mathfrak W$ by Hensel's Lemma.  In particular, there is a $K$-rational point of $W^r_d(C)$ in the preimage of $[D]$, as required.
\end{proof}

\begin{proof}[Proof of Prop.~\ref{Prop:Reduced}]
From the proof of Theorem~\ref{Thm:MainThm}, we know that the intersection of $W^r_d(C)$ with $\rho$ translates of the theta divisor consists of distinct
reduced points. Thus, $W^r_d(C)$ is smooth at those points and so it is generically reduced. However, $W^r_d(C)$ is a determinantal locus and hence
Cohen-Macaulay, so it has no embedded points, and thus $W^r_d(C)$ is everywhere reduced.
\end{proof}

\bibliography{math}

\end{document}